\documentclass{amsart}

\usepackage{diagbox}

\usepackage{stmaryrd}

\usepackage{graphicx} 
\usepackage{amsmath,amsfonts,euscript,amscd,amsthm,amssymb,upref,graphics,color,verbatim, float, placeins,mathrsfs, mathtools,comment,tikz, multirow, makecell, tikz-cd}

\newcommand{\Bscr}{\mathscr{B}}  

\newcommand{\Dscr}{\mathscr{D}}

\newcommand{\Lscr}{\mathscr{L}}
\newcommand{\Mscr}{\mathscr{M}}

\newcommand{\Oscr}{\mathscr{O}}
\newcommand{\Pscr}{\mathscr{P}}

\newcommand{\Xscr}{\mathscr{X}}

\usepackage{ulem}  

\usepackage[margin=3cm]{geometry}
\usepackage[all]{xy}
\usepackage{hyperref}
\usepackage[shortlabels]{enumitem}

\newcommand{\lnd}{\operatorname{{\rm LND}}}

\newcommand{\Der}{\operatorname{{\rm Der}}}

\newcommand{\Nat}{\ensuremath{\mathbb{N}}}

\newcommand{\aff}{\ensuremath{\mathbb{A}}}
\newcommand{\bk}{{\ensuremath{\rm \bf k}}}

\newcommand{\trdeg}{	\operatorname{{\rm trdeg}}}
\newcommand{\Frac}{		\operatorname{{\rm Frac}}}

\newcommand{\Lie}{		\operatorname{{\rm Lie}}}

\newcommand{\setspec}[2]{\big\{\,#1\, \mid \,#2\, \big\}}

\newcommand{\isom}{\cong}

\newcommand{\Aut}{		\operatorname{{\rm Aut}}}

\newcommand{\Spec}{		\operatorname{{\rm Spec}}}

\newcommand{\locfin}{\operatorname{{\rm locfin}}}

\DeclareMathOperator{\image}{im}

\newtheorem{corollary}{Corollary}

\swapnumbers

\newtheorem*{theorem*}{Theorem}
\newtheorem{proposition}[subsection]{Proposition}
\newtheorem*{proposition*}{Proposition}
\newtheorem{lemma}[subsection]{Lemma}

\newtheorem*{lemma*}{Lemma}

\theoremstyle{definition}

\newtheorem{remark}[subsection]{Remark}
\newtheorem*{remarkno}{Remark}
\newtheorem{definition}[subsection]{Definition}

\newtheorem{nothing*}[subsection]{}

\newtheorem{remarks}[subsection]{Remarks}

\newcommand{\pgoth}{\mathfrak{p}}

\newcommand{\Leul}{\EuScript{L}}

\title{Locally finite sets of derivations}

\author{Michael Chitayat, Daniel Daigle and Andriy Regeta}

\address{Department of Mathematics and Statistics, University of Ottawa, Ottawa, ON, Canada, K1N 6N5.}
\email{ddaigle@uottawa.ca}

\address{Dipartimento di Matematica ``Tullio Levi-Civita'', 
Universit\`a di Padova, Via Trieste 63, I-35121 Padova}
\email{michael.chitayat@unipd.it, andriyregeta@gmail.com}

\date{\today}

\begin{document}

\subjclass[2020]{Primary: 13N15, 17A36, 17B66.  Secondary: 14R20, 14J50, 17B45.}
\keywords{Derivation, locally finite, integrable Lie algebra, affine variety.}


\newcommand{\I}{\text{\rm I}}
\newcommand{\II}{\text{\rm II}}
\newcommand{\plinth}{\operatorname{{\rm pl}}}

\newcommand{\bbD}{\mathbb{D}}
\newcommand{\dgoth}{\mathfrak{d}}
\newcommand{\Hom}{\operatorname{{\rm Hom}}}
\newcommand{\Span}{\operatorname{{\rm Span}}}
\newcommand{\kk}[1]{\bk^{[{#1}]}}
\newcommand{\Norm}{\operatorname{{N}}}
\newcommand{\ad}{	\operatorname{\text{\rm ad}}}

\newcommand{\UnitalAlg}[1]{\langle #1 \rangle_{\text{\rm u-alg}}}
\newcommand{\AssAlg}[1]{\langle #1 \rangle_{\text{\rm alg}}}
\newcommand{\LieAlg}[1]{\langle #1 \rangle_{\text{\rm Lie}}}

\newcommand{\lfd}{\operatorname{{\rm LFD}}}
\newcommand{\LieWLF}{\operatorname{{\rm LieWLF}}}
\newcommand{\SolLieWLF}{\operatorname{{\rm SolLieWLF}}}

\newcommand{\rtrdeg}{	\operatorname{{\rm rtrdeg}}}
\newcommand{\grank}{	\operatorname{\rm grank}}

\newcommand{\rien}[1]{}

\newcommand{\ba}{{\bf a}}
\newcommand{\bb}{{\bf b}}
\newcommand{\bx}{{\bf x}}


\setlength{\marginparwidth}{2.5cm}  
\newcommand{\andriy}[1]{{\color{blue}*}\marginpar{\tiny \color{blue} AR: #1}}


\begin{abstract}
Given an algebra $B$ over a field $\bk$, we study conditions under which a Lie subalgebra of $\Der_\bk(B)$ is locally finite as a set of derivations.
As an application of our results, we show that if $X$ is a quasi-affine variety over an arbitrary field $\bk$,
and if $\Lscr$ is a finitely generated solvable Lie subalgebra of $\Der_\bk\Oscr_X(X)$ consisting of locally finite derivations,
then $\Lscr$ is locally finite. If, moreover, $\bk$ is algebraically closed and of characteristic zero, and $X$ is irreducible and affine, then $\Lscr$ is integrable.
\end{abstract}

\rien{ 
\begin{enumerate}

\item
Given an algebra $B$ over a field $\bk$,
we study conditions under which a Lie subalgebra of $\Der_\bk(B)$ is a locally finite set of derivations.
A consequence of our results is that if $X$ is a quasi-affine variety over an arbitrary field $\bk$,
and if $\Lscr$ is a solvable Lie subalgebra of $\Der_\bk\Oscr_X(X)$ consisting of locally finite derivations,
then every finitely generated Lie subalgebra of $\Lscr$ is locally finite.

\item Given an algebra $B$ over a field $\bk$,
we study conditions under which a Lie subalgebra of $\Der_\bk(B)$ is a locally finite set of derivations.
A consequence of our results is that if $X$ is a quasi-affine variety over an arbitrary field $\bk$,
and if $\Lscr$ is a solvable Lie subalgebra of $\Der_\bk\Oscr_X(X)$ consisting of locally finite derivations,
then $\Lscr$ is ``locally integrable'' in the sense that it admits a countable increasing filtration by locally finite Lie subalgebras.

\item Given an algebra $B$ over a field $\bk$,
we study conditions under which a Lie subalgebra of $\Der_\bk(B)$ is a locally finite set of derivations.
A consequence of our results is that if $X$ is a quasi-affine variety over an arbitrary field $\bk$,
and if $\Lscr$ is a solvable Lie subalgebra of $\Der_\bk\Oscr_X(X)$ consisting of locally finite derivations,
then $\Lscr$ admits a countable increasing filtration by locally finite Lie subalgebras.

\item Given an algebra $B$ over a field $\bk$,
we study conditions under which a Lie subalgebra of $\Der_\bk(B)$ is a locally finite set of derivations.
A consequence of our results is that if $X$ is a quasi-affine variety over an arbitrary field $\bk$,
and if $\Lscr$ is a finitely generated solvable Lie subalgebra of $\Der_\bk\Oscr_X(X)$ consisting of locally finite derivations,
then $\Lscr$ is a locally finite set of derivations.
If $\bk$ is algebraically closed and $X$ is irreducible and affine, $\Lscr$ is integrable.

\item Given an algebra $B$ over a field $\bk$, we study conditions under which a Lie subalgebra of $\Der_\bk(B)$ is locally finite as a set of derivations.
As an application of our results, we show that if $X$ is a quasi-affine variety over an arbitrary field $\bk$,
and if $\Lscr$ is a finitely generated solvable Lie subalgebra of $\Der_\bk\Oscr_X(X)$ consisting of locally finite derivations,
then $\Lscr$ is locally finite. If, moreover, $\bk$ is algebraically closed and $X$ is irreducible and affine, then $\Lscr$ is integrable.

\end{enumerate}
\end{abstract} 
} 


\maketitle


\section{Introduction}

Let $B$ be an algebra over a field $\bk$ and let $\Der_\bk(B)$ be the Lie algebra of $\bk$-derivations of $B$.
A subset $\Delta$ of $\Der_\bk(B)$ is called {\it locally finite\/} if for each $b \in B$
there exists a finite dimensional $\bk$-linear subspace $U$ of $B$ such that $b \in U$ and $D(U) \subseteq U$ for all $D \in \Delta$.
We say that $\Delta$ is {\it weakly locally finite\/} if every finite subset of $\Delta$ is locally finite.
It is straightforward to verify that if
$\Delta$ is locally finite (resp. weakly locally finite) then so is the Lie subalgebra $\LieAlg{\Delta}$ of $\Der_\bk(B)$ generated by $\Delta$.
A derivation $D \in \Der_\bk(B)$ is {\it locally finite\/} if $\{D\}$ is a locally finite subset of $\Der_\bk(B)$ in the above sense.
We denote the set of locally finite $\bk$-derivations of $B$ by $\lfd_\bk(B)$.

The concept of weak local finiteness is relevant to the study of automorphism groups of affine algebraic varieties,
but the terminology around it has not yet stabilized in the literature devoted to that subject.
In several sources the concept remains unnamed, with weakly locally finite Lie algebras described ad hoc as
``algebras that can be filtered by locally finite subalgebras.''
\cite{Arz_Zaid_2026} introduced the term ``locally integrable'' for what we call ``weakly locally finite''
(see Remark \ref{AE2F4C43-C9A8-4A99-805E-F255752D6D30} for details),
but since we develop the theory at a level of generality where integrability does not have a meaning,
it could be misleading to use that term here.
To ensure clarity and avoid cumbersome phrasing, we use the neutral term ``weakly locally finite.''

Clearly, if a Lie subalgebra $\Lscr$ of $\Der_\bk(B)$ is weakly locally finite then $\Lscr \subseteq \lfd_\bk(B)$.
This paper is motivated by the question of whether the converse holds,
and more generally by the problem of determining which hypotheses on a Lie subalgebra $\Lscr \subseteq \Der_\bk(B)$
ensure that $\Lscr$ is weakly locally finite.


Variants of this question are investigated in \cite{Zaid:LocFin_arXiv_2026} in the case where $B$ is the ring of regular functions on an affine variety.
In particular, \cite{Zaid:LocFin_arXiv_2026} shows that if $\bk$ is an algebraically closed field of characteristic zero
and $\Lscr$ is a solvable Lie subalgebra of $\Lie(\Aut \aff^2)$ generated by locally finite derivations, then $\Lscr$ is weakly locally finite.

We consider the above question from a very general standpoint.
In the present text, an {\it algebra} over a field $\bk$ is a pair $(B,\cdot)$ where $B$ is a $\bk$-vector space and ``$\cdot$'' is
an arbitrary $\bk$-bilinear map $B \times B \to B$, $(x,y) \mapsto x \cdot y$ (so $B$ need not be commutative or associative).
We say that the $\bk$-algebra $B$ is {\it derivation-finite\/} if there exists a finite subset $S$ of $B$ such that
the only $D \in \Der_\bk(B)$ that satisfies $D(x)=0$ for all $x \in S$ is the zero derivation.
Note that the algebras commonly encountered in algebraic geometry are derivation-finite (see \ref{E846360A-FB67-4DC0-8E96-92430F4D1817}).

The following are the main results of this note.
In the first four statements, $\bk$ is an arbitrary field and the only assumption on $B$ is that it is a derivation-finite $\bk$-algebra.

\begin{theorem*} 
Let $B$ be a derivation-finite algebra over a field $\bk$.
Let $S$ be a subset of $\lfd_\bk(B)$ that is closed under the Lie bracket and define the sequence of sets $(S_i)_{i \in \Nat}$
by declaring that $S_0 = S$ and that $S_{n+1} = \setspec{ [D,E] }{ D,E \in S_n }$ for each $n \in \Nat$.
\begin{enumerate}[\rm(a)]

\item  $S_0 \supseteq S_1 \supseteq S_2 \supseteq \cdots$ and each set $S_n$ is closed under the Lie bracket.

\item If there exists $n \in \Nat$ such that $S_n$ is a weakly locally finite set, then $S$ is a weakly locally finite set.

\end{enumerate}
\end{theorem*}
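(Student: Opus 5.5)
Part (a) is a formal consequence of the definitions, and I would dispose of it first. Since $S$ is closed under the bracket, $S_1=\setspec{[D,E]}{D,E\in S}\subseteq S=S_0$. Suppose $S_{n}\subseteq S_{n-1}$ and $S_n$ is closed under the bracket. Then $S_{n+1}$ consists of brackets of elements of $S_n$, so $S_{n+1}\subseteq S_n$. Moreover, if $D,E\in S_{n+1}\subseteq S_n$, then $[D,E]\in S_{n+1}$ by the very definition of $S_{n+1}$. Induction gives (a).

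For (b), I would first reduce to the case $n=1$. By (a), each $S_{k}$ is a subset of $\lfd_\bk(B)$ that is closed under the bracket, and $(S_k)_1=S_{k+1}$. So it suffices to prove: \emph{if $S\subseteq\lfd_\bk(B)$ is closed under the bracket and $S_1$ is weakly locally finite, then $S$ is weakly locally finite.} We then descend from $S_n$ to $S_{n-1}$, and so on down to $S_0$.

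The plan for $n=1$ rests on two auxiliary lemmas.

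\emph{Lemma A.} Since $B$ is derivation-finite, $\ad D$ is locally finite on $\Der_\bk(B)$ for every $D\in\lfd_\bk(B)$. To see this, fix a finite set $\{x_1,\dots,x_m\}$ such that a derivation vanishing on it is zero. Then $E\mapsto(E(x_1),\dots,E(x_m))$ is an injective linear map $\Der_\bk(B)\to B^m$. Expanding, $(\ad D)^k(E)(x_j)=\sum_a \pm\binom ka D^aED^{k-a}(x_j)$. Choose $D$-stable finite-dimensional subspaces $V_j\ni x_j$, and then a $D$-stable finite-dimensional subspace $W$ containing $\sum_j E(V_j)$. All the vectors $(\ad D)^k(E)(x_j)$ lie in $W$, so injectivity bounds $\dim\Span\{(\ad D)^kE\}_{k\ge0}$ by $m\dim W$.

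\emph{Lemma B.} If $N\subseteq\Der_\bk(B)$ is a locally finite Lie subalgebra, $D\in\lfd_\bk(B)$, and $[D,N]\subseteq N$, then $\bk D+N$ is locally finite. To prove it, take an $N$-stable finite-dimensional $U\ni b$ and set $U'=\sum_{k\ge0}D^k(U)$. This is finite-dimensional because $U$ is, and $D$ is locally finite. It is $D$-stable, and it is $N$-stable because $E D^k=\sum_j \pm\binom kj D^{k-j}(\ad D)^j(E)$ and $(\ad D)^j(E)\in N$.

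Now let $F=\{D_1,\dots,D_r\}\subseteq S$ and let $A=\LieAlg F$. Every Lie monomial in the $D_i$ lies in $S$, so $A$ is spanned by elements of $S$, and $I=[A,A]$ is spanned by elements of $S_1$. Since $A=\Span(F)+I$ and $I$ is an ideal, each $D_i$ normalizes $I$ and every subalgebra of the form $\bk D_1+\dots+\bk D_j+I$. If $I$ is locally finite, then applying Lemma B $r$ times gives that $A$, hence $F$, is locally finite. This is exactly what we need.

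The main obstacle is proving that $I$ is locally finite. Weak local finiteness of $S_1$ only controls Lie algebras generated by \emph{finitely many} elements of $S_1$, while $I$ is a priori infinitely generated, for instance by all $\ad_{D_{i_1}}\cdots\ad_{D_{i_k}}[D_i,D_j]$.

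My first attempt would be induction on $r$, using Lemma A to obtain finite generation. For $r=2$, $H=\Span\{(\ad D_1)^kD_2\}_{k\ge1}$ is finite-dimensional by Lemma A and is spanned by finitely many elements of $S_1$. Hence $N=\LieAlg H$ is locally finite and normalized by $D_1$, and Lemma B makes $\bk D_1+N$ locally finite. One then has to incorporate $D_2$; here I would try to enlarge $N$ to an $\ad D_2$-stable algebra by again using Lemma A, now applied to $\ad D_2$ acting on the finitely many generators. The aim is to produce a finite subset $G\subseteq S_1$ with $\LieAlg G\supseteq I$ that is stable under all $\ad D_i$.

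If finite generation fails, the fallback is to work pointwise for a fixed $b\in B$. Via the injection into $B^m$, one would show that the $D_i$ together with finitely many elements of $S_1$ already determine a finite-dimensional stable subspace containing $b$.
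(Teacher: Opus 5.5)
Your reduction to $n=1$ is correct, and so are your Lemmas A and B; they are essentially the paper's Lemma 4979E5BB(a) and Lemma BB8F3187. The proof has a genuine gap exactly where you flag it, however: you never show that $I=[A,A]$ is locally finite, and the route you sketch toward it is circular. Lemma A only produces finite-dimensional subspaces stable under \emph{one} operator $\ad D_i$ at a time. Once you enlarge $N$ to make it $\ad D_2$-stable, you lose $\ad D_1$-stability, and nothing guarantees that this alternation terminates. A finite-dimensional subspace of $\Span(S_1)$ containing $[D_1,D_2]$ and stable under both $\ad D_1$ and $\ad D_2$ is equivalent to the conclusion you want. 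Indeed, by derivation-finiteness, local finiteness of $F$ forces $A=\LieAlg{F}$, and hence $I$, to be finite-dimensional. So such a subspace cannot be produced without a new idea. The ``pointwise fallback'' is not yet an argument.

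The missing idea is to phrase the inductive step in terms of \emph{weak} local finiteness of an infinite set, not local finiteness of a particular algebra.
\begin{itemize}
\item \textbf{Key lemma.} If $\Delta\subseteq\Der_\bk(B)$ is weakly locally finite, $D\in\lfd_\bk(B)$ and $[D,\Delta]\subseteq\Delta$, then $\{D\}\cup\Delta$ is weakly locally finite.
\item \textbf{Proof of the key lemma.} Given a finite $T\subseteq\Span(\Delta)$, Lemma A gives a finite-dimensional $\ad D$-stable subspace $\Delta'\subseteq\Span(\Delta)$ containing $T$. Since $\Delta'$ lies in the span of finitely many elements of $\Delta$, it is locally finite. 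Your Lemma B then makes $\{D\}\cup\Delta'$ locally finite; its proof works verbatim for any $\ad D$-stable locally finite subspace, not just Lie subalgebras.
\item \textbf{Stability observation.} For every $X$ with $S_1\subseteq X\subseteq S$ and every $D\in S$, you have $[D,X]\subseteq[S,S]=S_1\subseteq X$.
\item \textbf{Induction.} Starting from $S_1$, adjoin $D_1,\dots,D_r$ one at a time. The key lemma shows that $S_1\cup\{D_1,\dots,D_j\}$ is weakly locally finite for each $j$, so $F$ is locally finite.
\end{itemize}
The previously adjoined $D_i$ are then controlled by weak local finiteness, not by stability, so only a single $\ad D$ ever has to act. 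This is exactly the paper's argument, which uses Zorn's lemma on sets between $S_n$ and $S_{n-1}$; your finite induction suffices. It also bypasses $I$ entirely.
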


Given a Lie algebra $\Lscr$, let $\Lscr'$ be the Lie subalgebra of $\Lscr$ generated by $\setspec{ [x,y] }{ x,y \in \Lscr }$,
and define $\Lscr^{(0)} = \Lscr$ and $\Lscr^{(n+1)} = \big( \Lscr^{(n)} \big)'$ for $n\ge0$.
Recall that $\Lscr$ is {\it solvable\/} if there exists $d \in \Nat$ such that $\Lscr^{(d)} = 0$.

\begin{corollary} \label {21AFB0A0-0700-4E7F-8AE2-D3557F9656CC}
Let $B$ be a derivation-finite algebra over a field $\bk$
and let $\Lscr$ be a Lie subalgebra of $\Der_\bk(B)$ such that $\Lscr \subseteq \lfd_\bk(B)$.
\begin{enumerate}[\rm(a)]

\item If there exists $n \in \Nat$ such that $\Lscr^{(n)}$ is weakly locally finite, then $\Lscr$ is weakly locally finite.

\item If $\Lscr$ is solvable then it is weakly locally finite.

\end{enumerate}
\end{corollary}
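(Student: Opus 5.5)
We derive the Corollary from the Theorem stated just before it, applied with $S = \Lscr$. Since $\Lscr$ is a Lie subalgebra of $\Der_\bk(B)$ contained in $\lfd_\bk(B)$, it is a subset of $\lfd_\bk(B)$ closed under the Lie bracket. So the Theorem applies. It produces the sequence $S_0 = \Lscr$, $S_{n+1} = \setspec{[D,E]}{D,E \in S_n}$. By part (a) of the Theorem this sequence is decreasing and each $S_n$ is closed under the bracket.

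The key step is to compare $S_n$ with the derived series, namely to show that $S_n \subseteq \Lscr^{(n)}$ for all $n$. We argue by induction on $n$. For $n=0$ both sides equal $\Lscr$. If $S_n \subseteq \Lscr^{(n)}$, then every element $[D,E]$ of $S_{n+1}$ has $D,E \in \Lscr^{(n)}$. Hence $[D,E] \in \big(\Lscr^{(n)}\big)' = \Lscr^{(n+1)}$, because $\Lscr^{(n+1)}$ is the Lie subalgebra generated by all such brackets.

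For (a), suppose $\Lscr^{(n)}$ is weakly locally finite. A subset of a weakly locally finite set is weakly locally finite, since every finite subset of $S_n$ is a finite subset of $\Lscr^{(n)}$ and is therefore locally finite. So $S_n$ is weakly locally finite, and part (b) of the Theorem gives that $S = \Lscr$ is weakly locally finite.

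For (b), suppose $\Lscr$ is solvable, and choose $d$ with $\Lscr^{(d)} = 0$. The zero Lie algebra $\{0\}$ is trivially weakly locally finite: for each $b \in B$, the space $U = \bk b$ is stable under the zero derivation. Part (a) of the Corollary with $n = d$ then gives the conclusion.

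None of these steps is a real obstacle; all the substance lies in the Theorem. The only point requiring care is the inclusion $S_n \subseteq \Lscr^{(n)}$. The sets $S_n$ are generally not subspaces, and the inclusion may be strict, but only the inclusion is needed, together with the fact that weak local finiteness passes to subsets.
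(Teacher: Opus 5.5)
Your proposal is correct and follows essentially the same route as the paper: apply the Theorem with $S=\Lscr$, note $S_n \subseteq \Lscr^{(n)}$ so that $S_n$ inherits weak local finiteness, and deduce (b) from (a) using $\Lscr^{(d)}=\{0\}$. You simply spell out the induction for $S_n \subseteq \Lscr^{(n)}$ and the triviality of $\{0\}$ being weakly locally finite, which the paper leaves implicit.
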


In the special case where $B$ is the commutative polynomial ring in two variables over an algebraically closed field of characteristic zero,
part (b) of Corollary \ref{21AFB0A0-0700-4E7F-8AE2-D3557F9656CC} is also proved in \cite{Zaid:LocFin_arXiv_2026}.

\begin{corollary} \label {CFBD926C-B48D-4238-8595-F09AED548518}
Let $B$ be a derivation-finite algebra over a field $\bk$ and let $\Lscr$ be a solvable Lie subalgebra of $\Der_\bk(B)$.
If $\Lscr$ has a generating set included in $\lfd_\bk(B)$ and closed under the Lie bracket,
then $\Lscr$ is weakly locally finite.
\end{corollary}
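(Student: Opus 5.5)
Let $S \subseteq \lfd_\bk(B)$ be a generating set of $\Lscr$ that is closed under the Lie bracket. The plan is to apply part (b) of the Theorem to $S$, and then pass from $S$ to $\Lscr = \LieAlg{S}$.

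Define $S_0 = S$ and $S_{n+1} = \setspec{[D,E]}{D,E \in S_n}$ as in the Theorem. By induction on $n$ I would show $S_n \subseteq \Lscr^{(n)}$.
\begin{itemize}
\item The case $n=0$ is trivial, since $S \subseteq \Lscr$.
\item If $S_n \subseteq \Lscr^{(n)}$, then every $[D,E]$ with $D,E \in S_n$ lies in $\setspec{[x,y]}{x,y\in\Lscr^{(n)}}$. This set is contained in $(\Lscr^{(n)})' = \Lscr^{(n+1)}$.
\end{itemize}
Since $\Lscr$ is solvable, there is $d$ with $\Lscr^{(d)}=0$. Hence $S_d \subseteq \{0\}$.

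A subset of $\{0\}$ is obviously weakly locally finite: for each $b\in B$ take $U=\bk b$, which the zero derivation maps into itself. Also, $S$ is a subset of $\lfd_\bk(B)$ closed under the bracket, which is exactly the hypothesis of the Theorem. So Theorem (b) gives that $S$ is weakly locally finite.

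It remains to pass from $S$ to $\Lscr = \LieAlg{S}$. The introduction notes that $\LieAlg{\Delta}$ is weakly locally finite whenever $\Delta$ is; I would justify this briefly as follows.
\begin{itemize}
\item A finite subset $F$ of $\LieAlg{S}$ lies in $\LieAlg{S_F}$ for some finite $S_F\subseteq S$, since each element of $F$ is a finite linear combination of iterated brackets of elements of $S$.
\item $S_F$ is locally finite, so each $b\in B$ lies in a finite dimensional subspace $U$ stable under every $D\in S_F$.
\item Such a $U$ is then stable under all linear combinations and brackets of elements of $S_F$, because $[D,E](U)\subseteq D(E(U))+E(D(U))\subseteq U$. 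Hence $U$ is stable under $\LieAlg{S_F}\supseteq F$.
\end{itemize}
Thus $F$ is locally finite, and $\Lscr$ is weakly locally finite.

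No step here is a real obstacle; all the substance is in the Theorem. The only point needing care is the inclusion $S_n\subseteq\Lscr^{(n)}$. Closure of $S$ under the bracket is needed to invoke the Theorem, not for this inclusion.
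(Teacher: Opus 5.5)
Your proof is correct and is essentially the paper's argument. You apply part (b) of the Theorem to the bracket-closed generating set $S$, using $S_d \subseteq \Lscr^{(d)} = \{0\}$, and then pass to $\Lscr = \LieAlg{S}$ via Remark~\ref{03442952-E213-4085-B07A-FABC7408E85A}; the paper leaves the inclusion $S_n \subseteq \Lscr^{(n)}$ implicit (it is the same observation used in the proof of Corollary~\ref{21AFB0A0-0700-4E7F-8AE2-D3557F9656CC}), and you have simply written it out.
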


\begin{corollary} \label {CA556DEE-FE7D-4364-A217-A983EBE59700}
Let $B$ be a derivation-finite algebra over a field $\bk$
and let $\Lscr$ be a Lie subalgebra of $\Der_\bk(B)$.
If $\Lscr$ is weakly locally finite
and $\Delta$ is a subset of $\lfd_\bk(B)$ such that $[\Delta,\Delta] \subseteq \Lscr$ and $[\Delta,\Lscr] \subseteq \Lscr$,
then  $\Mscr := \LieAlg{\Delta \cup \Lscr}$ is weakly locally finite and $[\Mscr,\Mscr] \subseteq \Lscr$. 
\end{corollary}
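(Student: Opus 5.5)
We reduce Corollary \ref{CA556DEE-FE7D-4364-A217-A983EBE59700} to part (b) of the Theorem, using $S := \Delta \cup \Lscr$ after closing it under the bracket. First we check that $[\Mscr,\Mscr] \subseteq \Lscr$. Let $V = \Span_\bk(\Delta) + \Lscr$. The hypotheses $[\Delta,\Delta]\subseteq\Lscr$ and $[\Delta,\Lscr]\subseteq\Lscr$, together with $[\Lscr,\Lscr]\subseteq\Lscr$, give $[V,V] \subseteq \Lscr \subseteq V$. So $V$ is a Lie subalgebra containing $\Delta\cup\Lscr$. It is clearly the smallest one, hence $\Mscr = V$ and $[\Mscr,\Mscr]\subseteq\Lscr$.

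Next we need a bracket-closed subset of $\lfd_\bk(B)$ to which the Theorem applies. The set $V$ itself is not obviously inside $\lfd_\bk(B)$, since a sum of locally finite derivations need not be locally finite. So we take instead $S := \Delta \cup \Lscr$. It is contained in $\lfd_\bk(B)$: we have $\Delta\subseteq\lfd_\bk(B)$ by hypothesis, and $\Lscr \subseteq \lfd_\bk(B)$ because $\Lscr$ is weakly locally finite. It is closed under the bracket, because every bracket of two elements of $S$ lies in $\Lscr \subseteq S$. In the notation of the Theorem, $S_1 = \setspec{[D,E]}{D,E\in S} \subseteq \Lscr$. Any subset of a weakly locally finite set is weakly locally finite, so $S_1$ is weakly locally finite. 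Part (b) of the Theorem, with $n=1$, then shows that $S$ is weakly locally finite.

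Finally, as recalled in the introduction, if $S$ is weakly locally finite then so is $\LieAlg{S}$. Here $\LieAlg{S} = \Mscr$, which concludes the proof.

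The only step that needs care is the choice of $S$. One must avoid taking the span, whose elements are not known to be locally finite, and use the union $\Delta\cup\Lscr$. This union is bracket-closed exactly because of the two bracket hypotheses. Everything else is a direct application of the Theorem and of the remark that weak local finiteness passes to generated Lie subalgebras.
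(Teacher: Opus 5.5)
Your proof is correct and follows the paper's argument. Like the paper, you take $S=\Delta\cup\Lscr\subseteq\lfd_\bk(B)$, note $S_1\subseteq\Lscr$, apply part (b) of the Theorem with $n=1$, and pass to $\LieAlg{S}=\Mscr$. Your identification $\Mscr=\Span_\bk(\Delta)+\Lscr$ simply spells out the paper's one-line deduction of $[\Mscr,\Mscr]\subseteq\Lscr$ from $[S,S]\subseteq\Lscr$.
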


In the following statement, $\bk$ is an arbitrary field and $X$ is not necessarily irreducible:

\begin{corollary} \label {8B9F8C8D-5E7E-44EA-8762-9426050B1F90}
Let $X$ be a quasi-affine variety over a field $\bk$.
If $\Lscr$ is a solvable Lie subalgebra of $\Der_\bk\Oscr_X(X)$ consisting of locally finite derivations, then $\Lscr$ is weakly locally finite
and admits a countable increasing filtration by locally finite Lie subalgebras.
\end{corollary}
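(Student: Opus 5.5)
The plan is to deduce the first assertion from Corollary \ref{21AFB0A0-0700-4E7F-8AE2-D3557F9656CC}(b), and the second from a countability argument.

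\textbf{Step 1: $B$ is derivation-finite.} Set $B=\Oscr_X(X)$. By \ref{E846360A-FB67-4DC0-8E96-92430F4D1817}, algebras of this kind are derivation-finite; I would either cite it directly or reprove it as follows. Since $X$ is quasi-affine, it is an open subvariety of an affine variety $\Spec A$ with $A=\bk[a_1,\dots,a_n]$, so $A$ is finitely generated. Moreover there are $f_1,\dots,f_m\in A$ such that $X=\bigcup D(f_j)$. Then $B$ embeds into $\prod_j A_{f_j}$, the restriction maps $B\to A_{f_j}$ are localizations on the open pieces, and each derivation of $B$ extends uniquely to the localizations. Let $D$ be a derivation vanishing on $S=\{a_1,\dots,a_n\}$ (images in $B$). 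Then $D$ is zero on $A$, hence zero on each $A_{f_j}$ by the quotient rule, hence zero on $B$. The only care needed is the injection of $B$ into $\prod A_{f_j}$ when $X$ is reducible or nonreduced. This uses only the sheaf property, so it is fine.

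\textbf{Step 2: weak local finiteness.} With $B$ derivation-finite and $\Lscr$ solvable with $\Lscr\subseteq\lfd_\bk(B)$, Corollary \ref{21AFB0A0-0700-4E7F-8AE2-D3557F9656CC}(b) gives directly that $\Lscr$ is weakly locally finite.

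\textbf{Step 3: the countable filtration.} I expect this to be the main obstacle, since $\bk$ may be uncountable and then $\Lscr$ need not be countably generated. I would use that $\Der_\bk(B)$ embeds $B$-linearly, via $D\mapsto(D(a_1),\dots,D(a_n))$, into $B^n$, which is injective by Step 1. I would then exploit the finite generation of $B$ over $\bk$ together with the structure given by weak local finiteness: every finitely generated subalgebra $\LieAlg{D_1,\dots,D_r}$ is locally finite. It stabilizes a finite dimensional subspace $U\ni a_1,\dots,a_n$, and its elements are determined by their restriction to $U$. So $\LieAlg{D_1,\dots,D_r}$ embeds into $\operatorname{End}(U)$ and is therefore finite dimensional.

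Hence every finitely generated Lie subalgebra of $\Lscr$ is finite dimensional and locally finite. To get a countable chain, I would argue through the dimension. The subspaces $\Lscr_U=\{D\in\Lscr : D(U)\subseteq U\}$ embed into $\operatorname{End}(U)$, so they are finite dimensional. I would choose an exhausting increasing sequence $U_1\subseteq U_2\subseteq\cdots$ of finite dimensional subspaces. This is possible because $B$ has countable dimension over $\bk$: $A$ is finitely generated and $B$ is a finite module-type gluing, which I would check. Then I would take $\Lscr_k$ to be a maximal finite-dimensional locally finite subalgebra obtained from data bounded by $U_k$. Each finite subset of $\Lscr$ lies in the subalgebra stabilizing some finite dimensional $V\supseteq\{a_i\}$, and $V\subseteq U_k$ for some $k$. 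I would then need to check that the union of the $\Lscr_k$ is all of $\Lscr$ and that the chain increases. If the direct maximality argument fails, the fallback is that $\dim\LieAlg{F}$ over finite $F$ is bounded by $(\dim U)^2$, so $\Lscr$ itself is countable-dimensional and can be filtered by finitely generated subalgebras along a countable basis.
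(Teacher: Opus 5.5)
\textbf{There is a genuine gap in Step 3.} Steps 1 and 2 agree with the paper, which cites \ref{E846360A-FB67-4DC0-8E96-92430F4D1817}(f) and then Corollary \ref{21AFB0A0-0700-4E7F-8AE2-D3557F9656CC}(b). Your choice $S=\{a_1,\dots,a_n\}$ also works, since $B_{f_j}\cong A_{f_j}$.

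Your main plan in Step 3 is not well defined, and its natural reading fails. Take $\Lscr_{U_k}=\{D\in\Lscr : D(U_k)\subseteq U_k\}$. These need not increase with $k$, and a given $D$ need not stabilize any $U_k$ at all. For instance, take $B=\bk[x]$ in characteristic $0$ and $\Lscr=\bk\,\partial_x$. The only nonzero finite-dimensional $\partial_x$-stable subspaces are the $\bk[x]_{\le d}$. The exhausting chain $U_k=\Span_\bk\{x^j : j\le 2k-3 \text{ or } j=2k-1\}$ avoids all of them, so every $\Lscr_{U_k}$ is zero.

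Your fallback has the right final shape, but its justification is a non sequitur. A bound $\dim\LieAlg{F}\le(\dim U)^2$, with $U$ depending on $F$, says nothing about $\dim_\bk\Lscr$. Every finitely generated subalgebra of an abelian Lie algebra of uncountable dimension is finite dimensional, for example.

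What is missing is to combine two facts you already wrote down but never used together:
\begin{itemize}
\item the injective $\bk$-linear map $\Der_\bk(B)\to B^n$, $D\mapsto(D(a_1),\dots,D(a_n))$;
\item the countability of $\dim_\bk B$, which is immediate from your embedding $B\hookrightarrow\prod_j A_{f_j}$ since each $A_{f_j}$ is finitely generated.
\end{itemize}
Together they show that $\Der_\bk(B)$, and hence $\Lscr$, has countable dimension, whatever the cardinality of $\bk$. So your worry that $\Lscr$ need not be countably generated is unfounded. Now take a spanning family $D_1,D_2,\dots$ of $\Lscr$ and set $\Lscr_k=\LieAlg{D_1,\dots,D_k}$. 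Each $\Lscr_k$ is locally finite, because $\{D_1,\dots,D_k\}$ is a finite subset of the weakly locally finite set $\Lscr$ (Remark \ref{03442952-E213-4085-B07A-FABC7408E85A}). This is exactly Remark \ref{142CBB6B-6351-448D-8F1A-49C84484D46B}(c), which is how the paper concludes.
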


\begin{remarkno}
If $\bk$ is an algebraically closed field of characteristic zero and $X$ is an irreducible affine $\bk$-variety then the
conclusion of Corollary \ref{8B9F8C8D-5E7E-44EA-8762-9426050B1F90} is that $\Lscr$ is locally integrable, in the sense of \cite[Def.\ 2.5]{Arz_Zaid_2026}.
See Remark \ref{AE2F4C43-C9A8-4A99-805E-F255752D6D30} for details.
\end{remarkno}

\medskip

The results of this note will be used in a paper by the authors, in preparation,
on Borel subgroups of automorphism groups of affine varieties and Lie algebras of locally nilpotent derivations.

\medskip

We thank Mikhail Zaidenberg for valuable discussions that helped shape this note.

\section{Weakly locally finite sets of linear endomorphisms}

If $V$ is a vector space over a field $\bk$, we write $\Leul(V)$ for the associative $\bk$-algebra whose elements are the linear maps $F : V \to V$ and whose multiplication is the composition of maps.
Note that $\Leul(V)$ is also a Lie algebra over $\bk$, with $[F,G] = F \circ G - G \circ F$ for all $F,G \in \Leul(V)$.

\begin{definition} \label {582852DA-9D57-47AC-B351-A3A2CC446141}
Let $V$ be a vector space over a field $\bk$ and let $\Delta$ be a subset of $\Leul(V)$.
\begin{enumerate}[\rm(a)]

\item The set $\Delta$ is {\it locally finite\/} if for each $x \in V$ there exists a finite dimensional subspace $U$ of $V$
such that $x \in U$ and $F(U) \subseteq U$ for all $F \in \Delta$.

\item The set $\Delta$ is {\it weakly locally finite\/} if every finite subset of $\Delta$ is locally finite.

\end{enumerate}
\end{definition}

Clearly, if $\Delta$ is locally finite (resp.\ weakly locally finite) then so is every subset of $\Delta$.
We say that an element $F \in \Leul(V)$ is {\it locally finite\/} if $\{F\}$ is a locally finite subset of $\Leul(V)$ in the above sense.

\begin{remark} \label {03442952-E213-4085-B07A-FABC7408E85A}
Let $V$ be a vector space over a field $\bk$ and $\Delta$ a subset of $\Leul(V)$.
We write $\AssAlg{\Delta}$ for the {\it subalgebra of $\Leul(V)$ generated by $\Delta$}, i.e.,
the intersection of all $\bk$-subspaces $A$ of $\Leul(V)$ that are closed under composition of maps
and satisfy $\Delta \subseteq A$.
We write $\LieAlg{\Delta}$ for the {\it Lie subalgebra of $\Leul(V)$ generated by $\Delta$}.
It is easy to see that if $\Delta$ is locally finite (resp.\ weakly locally finite)
then so is each one of $\Span_\bk(\Delta) \subseteq \LieAlg{\Delta} \subseteq \AssAlg{\Delta}$.
\end{remark}

If $X,Y$ are subsets of a Lie algebra $\Lscr$, we define $[X,Y] = \setspec{ [x,y] }{ x \in X \text{ and } y \in Y }$, which is
also a subset of $\Lscr$.
The following result and Lemma \ref{104779FE-B0E2-4AD0-BB60-6BC97D307EB9} are analogous to \cite[Lemma 2]{Skutin_LND_2021}.

\begin{lemma}  \label {BB8F3187-088A-4A77-9DA8-3CF3D585A95B}
Let $V$ be a vector space over a field $\bk$ and let $\Delta$ be a locally finite subset of $\Leul(V)$.
If $T$ is a locally finite element of $\Leul(V)$ such that $[T,\Delta] \subseteq \Delta$, then $\{T\} \cup \Delta$ is locally finite.
\end{lemma}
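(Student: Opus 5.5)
Given $x \in V$, the plan is to build a finite dimensional subspace containing $x$ that is stable under $T$ and under every element of $\Delta$. Since $\Delta$ is locally finite, first choose a finite dimensional subspace $U_0 \ni x$ with $F(U_0)\subseteq U_0$ for all $F\in\Delta$. Since $T$ is locally finite and $U_0$ is finite dimensional, the subspace
\[
W := \sum_{i\ge 0} T^i(U_0)
\]
is finite dimensional. Indeed, $U_0$ is spanned by finitely many vectors, each lies in a finite dimensional $T$-stable subspace, and the sum of these subspaces contains $W$. Clearly $x\in W$ and $T(W)\subseteq W$. It then remains to show that $F(W)\subseteq W$ for every $F \in \Delta$.

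For this, fix $F\in\Delta$ and prove by induction on $i$ that $F(T^i(U_0)) \subseteq W$. More precisely, the claim to prove is
\[
P(i):\quad G\bigl(T^i(U_0)\bigr)\subseteq \textstyle\sum_{j\le i} T^j(U_0) \text{ for all } G\in\Delta .
\]
The case $i=0$ is the stability of $U_0$ under $\Delta$. For the inductive step, write $G T^{i+1} = G T\, T^{i} = T G T^i - [T,G]T^i$. By hypothesis $[T,G]\in\Delta$, so $P(i)$ applies to both $G$ and $[T,G]$. It gives $GT^i(U_0)\subseteq \sum_{j\le i}T^j(U_0)$, and applying $T$ to this lands in $\sum_{j\le i+1}T^j(U_0)$. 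It also gives $[T,G]T^i(U_0)\subseteq\sum_{j\le i}T^j(U_0)$. Adding the two yields $P(i+1)$.

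The key point, and the only real subtlety, is that the inductive statement must be quantified over all of $\Delta$ simultaneously rather than over a single $F$. This is exactly where the hypothesis $[T,\Delta]\subseteq\Delta$ is used, and it lets the same $U_0$ (chosen once, uniformly for $\Delta$) serve at every level of the induction. With $P(i)$ established for all $i$, we get $F(W)\subseteq W$ for all $F\in\Delta$. Hence $W$ witnesses local finiteness of $\{T\}\cup\Delta$ at $x$.
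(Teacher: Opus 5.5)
Your proof is correct and is essentially the paper's argument: the same subspace $W=\sum_{j\ge0}T^j(U)$, and the same induction on $F\,T^jU\subseteq U+TU+\cdots+T^jU$ for all $F\in\Delta$ simultaneously, using $FT=TF-[T,F]$ with $[T,F]\in\Delta$. You also spell out why $W$ is finite dimensional, which the paper only asserts.
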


\begin{proof}
Let $x \in V$. Since $\Delta$ is locally finite, there exists a finite dimensional subspace $U$ of $V$ such
that $x \in U$ and $F(U) \subseteq U$ for all $F \in \Delta$. Consider the subspace $W = \sum_{j=0}^\infty T^j(U)$ of $V$ and note that $x \in W$.
Since $U$ is finite dimensional and $T$ is locally finite, $W$ is finite dimensional.
Clearly, $T(W) \subseteq W$. To finish the proof, it suffices to show that $F(W) \subseteq W$ for all $F \in \Delta$.

For each $j \in \Nat$, define $W_j = U + T U + T^2 U + \cdots + T^j U$ and let $\Pscr(j)$ be the assertion
$$
F T^j U \subseteq W_j \qquad \text{for all $F \in \Delta$.}
$$
Since $F(U) \subseteq U$ for all $F \in \Delta$, $\Pscr(0)$ is true.
Suppose that $j \in \Nat$ is such that $\Pscr(j)$ is true.
Given any $F \in \Delta$ we have $FT^{j+1} = (F T)T^j = (T F - [T,F])T^j = T F T^j - F'T^j$ where $F' = [T,F] \in \Delta$, so $\Pscr(j)$ gives
$$
F T^{j+1} U
\subseteq T F T^j U + F'T^j U
\subseteq T W_j + W_j \subseteq W_{j+1},
$$
which shows that $\Pscr(j+1)$ is true. By induction, we obtain that $\Pscr(j)$ is true for all $j \in \Nat$.
It follows that if $F \in \Delta$ then $F(W) = \sum_{j=0}^\infty F T^j U \subseteq \sum_{j=0}^\infty W_j = W$, as desired.
\end{proof}

\section{Algebras and sets of derivations}

In this section, an {\it algebra} over a field $\bk$ is a pair $(B,\cdot)$ where $B$ is a $\bk$-vector space and ``$\cdot$'' is
an arbitrary $\bk$-bilinear map $B \times B \to B$, $(x,y) \mapsto x \cdot y$.

\begin{definition}  \label {0bg93mm5k6gnndrtuiecj8}
Let $B$ be an algebra over a field $\bk$.
\begin{enumerate}[\rm(a)]

\item A {\it $\bk$-derivation} of $B$ is a $\bk$-linear map $D : B \to B$ satisfying
$D(x \cdot y) = D(x) \cdot y + x \cdot D(y)$ for all $x,y \in B$. 
The set of all $\bk$-derivations of $B$ is denoted $\Der_\bk(B)$ and is a Lie algebra over $\bk$, with Lie bracket defined 
by $[D,E] = D \circ E - E \circ D$ for $D,E \in \Der_\bk(B)$.

\item We say that a subset of $\Der_\bk(B)$
is {\it locally finite\/} (resp.\ {\it weakly locally finite\/}) if it is so when viewed as a set of $\bk$-linear endomorphisms of $B$.

\item A derivation $D \in \Der_\bk(B)$ is {\it locally finite\/} if $\{D\}$ is a locally finite subset of $\Der_\bk(B)$.
We use the notation $\lfd_\bk(B) = \setspec{ D \in \Der_\bk(B) }{ \text{$D$ is locally finite} }$ 
and note that the set $\lfd_\bk(B)$ is in general neither closed under addition nor under the Lie bracket.

Clearly, if $\Delta$ is a weakly locally finite subset of $\Der_\bk(B)$ then $\Delta \subseteq \lfd_\bk(B)$.

\item The $\bk$-algebra $B$ is {\it derivation-finite\/} if there exists a finite subset $S$ of $B$ such that:
\begin{equation*}
\text{$(*)$ \quad the only $D \in \Der_\bk(B)$ that satisfies $D(x)=0$ for all $x \in S$ is the zero derivation.}
\end{equation*}

\end{enumerate}
\end{definition}

\begin{nothing*} \label {E846360A-FB67-4DC0-8E96-92430F4D1817}
{\bf Examples of derivation-finite $\bk$-algebras.}
\begin{enumerate}[\rm(a)]

\item {\it If $\bk$ is any field and $B$ is any type of finitely generated $\bk$-algebra (for instance a finitely generated commutative $\bk$-algebra,
or a finitely generated Lie algebra over $\bk$) then $B$ is derivation-finite.}
Indeed, any finite generating set of $B$ satisfies condition $(*)$ of Definition \ref{0bg93mm5k6gnndrtuiecj8}(d).

\item {\it If $A$ is a finitely generated commutative algebra over a field $\bk$, then every localization of $A$ is a derivation-finite $\bk$-algebra.}
To see this, let $\{a_1, \dots, a_n\}$ be a generating set of $A$ and $T$ a multiplicative set of $A$, and consider the finite subset
$S = \{ a_1/1 , \dots, a_n/1 \}$ of $B=T^{-1}A$. Suppose that $D \in \Der_\bk(B)$ is such that $D(s)=0$ for all $s \in S$.
Since $\ker(D)$ is a subalgebra of $B$ that contains $S$, we have $D(\alpha/1)=0$ for all $\alpha \in A$.
If $b \in B$ then there exist $t \in T$ and $\alpha \in A$ such that $(t/1)b = \alpha/1$; since $D(t/1)=0=D(\alpha/1)$,
we have $(t/1) D(b) = 0$, so $D(b)=0$ because $t/1$ is a unit of $B$.
This shows that $D=0$, so $B$ is derivation-finite.

\item {\it If $\bk$ is a field of characteristic zero and $B$ is a commutative integral domain containing $\bk$
and of finite transcendence degree over $\bk$, then $B$ is a derivation-finite $\bk$-algebra.}
Indeed, there exists a finite subset $S$ of $B$ such that $B$ is algebraic over $\bk[S]$;
since the characteristic is zero, $\ker(D)$ is algebraically closed in $B$ for any $D \in \Der_\bk(B)$,
so $S$ satisfies condition $(*)$ of Definition \ref{0bg93mm5k6gnndrtuiecj8}(d).

\item {\it If $\bk$ is an arbitrary field and $B$ is a commutative integral domain containing $\bk$ and such that
$\Frac(B)/\bk$ is a finitely generated field extension (where $\Frac(B)$ denotes the field of fractions of $B$),
then $B$ is a derivation-finite $\bk$-algebra.}
Indeed, there exists a finite subset $S$ of $B$ such that $\Frac(B) = \bk(S)$; then $S$ satisfies $(*)$.

\item {\it If $X$ is an integral scheme of finite type over a field $\bk$ then the $\bk$-algebra $\Oscr_X(X)$ is derivation-finite.
(In particular, if $X$ is an irreducible variety over any field then $\Oscr_X(X)$ is derivation-finite.)}
Indeed, if $\xi$ denotes the generic point of $X$ then the canonical homomorphism $\Oscr_X(X) \to \Oscr_{X,\xi}$ is an injective $\bk$-homomorphism,
so $\bk \subseteq \Oscr_X(X) \subseteq \Oscr_{X,\xi}$. Since $\Oscr_{X,\xi}$ is the function field of $X$, which is a finitely generated field extension of $\bk$,
the field extension $\Frac(\Oscr_X(X))/\bk$ is finitely generated and the claim follows from (d).

\item {\it If $X$ is an open subset of the affine scheme $\Spec A$, where $A$ is a finitely generated algebra over a field $\bk$,\footnote{It is understood
that $A$ is commutative, associative and unital.} then $\Oscr_X(X)$ is a derivation-finite $\bk$-algebra.
(In particular, if $X$ is a quasi-affine, not necessarily irreducible variety over any field, then $\Oscr_X(X)$ is derivation-finite.)}

\end{enumerate}
\end{nothing*}

\begin{proof}[Proof of {\rm(f)}]
Let $B = \Oscr_X(X)$.
Given $f \in B$, let $X_f$ denote the set of $x \in X$ such that the
image of $f$ by the canonical homomorphism $\Oscr_X(X) \to \Oscr_{X,x}$ does not belong to the  maximal ideal of $\Oscr_{X,x}$.
By \cite[Ex.\ II.2.16]{Hartshorne}, $X_f$ is open in $X$, $\Oscr_X( X_f ) \isom B_f$, and the restriction map
$\Oscr_X( X ) \to \Oscr_X( X_f )$, $g \mapsto g|_{X_f}$, coincides with the localization map $\phi_f : B \to B_f$, $g \mapsto g/1$;
so $\ker\phi_f = \setspec{ g \in \Oscr_X(X) }{ \, g|_{X_f} = 0 }$.

Since $X$ is open in $\Spec A$,
for each $x \in X$ there exists $a \in A$ such that $x \in \Dscr(a) \subseteq X$,
where $\Dscr(a) = \setspec{ \pgoth \in \Spec A }{ a \notin \pgoth }$.
If $f = a|_X \in B$ then $X_f = \Dscr(a) \isom \Spec A_a$, so $B_f \isom \Oscr_X( X_f ) \isom A_a$ is a finitely generated $\bk$-algebra.

So there exist $f_1, \dots, f_n \in B$ such that
$X = \bigcup_{i=1}^n X_{f_i}$ and each $B_{f_i}$ is finitely generated. It follows that $\bigcap_{i=1}^n \ker\phi_{f_i} = 0$,
because if $g \in \bigcap_{i=1}^n \ker\phi_{f_i}$ then $g|_{X_{f_i}} = 0$ for all $i=1,\dots,n$, so $g=0$.
For each $i=1,\dots, n$, let  $S_i = \{ b_{i,j}/f_i^{n_{i,j}} \}_{ 1 \le j \le p_i }$ be a finite generating set of $B_{f_i}$,
where $b_{i,j} \in B$ and $n_{i,j} \in \Nat$ for all $i,j$.
Consider the finite subset $S = \{f_1,\dots,f_n\} \cup \bigcup_{i=1}^n \{ b_{i,j} \}_{ 1 \le j \le p_i }$ of $B$.
Suppose that $D \in \Der_\bk(B)$ is such that $D(s)=0$ for all $s \in S$.
For each $i=1,\dots,n$, localization gives $D_{f_i} \in \Der_\bk(B_{f_i})$ such that $D_{f_i}(s)=0$ for all $s \in S_i$,
so $D_{f_i}=0$. If $b \in B$ then $0 = D_{f_i}(b/1)=D(b)/1 = \phi_{f_i}(D(b))$ for all $i$, so $D(b) \in \bigcap_{i=1}^n \ker\phi_{f_i} = \{0\}$,
which shows that $D=0$. So $B$ is derivation-finite.
\end{proof}

\begin{remarks} \label {142CBB6B-6351-448D-8F1A-49C84484D46B}
Let $B$ be a derivation-finite algebra over a field $\bk$.
\begin{enumerate}[\rm(a)]

\item {\it If $\Lscr$ is a locally finite  Lie subalgebra of $\Der_\bk(B)$ then $\Lscr$ is a finite dimensional vector space.}

To see this, choose a finite subset $S$ of $B$ that satisfies condition $(*)$ of Definition \ref{0bg93mm5k6gnndrtuiecj8}(d).
Since $\Lscr$ is locally finite, there exists a finite dimensional $\bk$-linear subspace $U$ of $B$ such that $S \subseteq U$ and $D(U) \subseteq U$ for
all $D \in \Lscr$.  The restriction map $\Lscr \to \Leul(U)$, $D \mapsto D|_U : U \to U$, is an injective linear map,
and since $\Leul(U)$ is finite-dimensional, so is $\Lscr$.
(This generalizes \cite[Lemma 1.6.2]{KraftZaidenberg2024}.)

\item {\it If $\Lscr$ is a weakly locally finite Lie subalgebra of $\Der_\bk(B)$ then every finitely generated Lie subalgebra of $\Lscr$
is locally finite.}

Indeed, if $\Delta$ is a finite subset of $\Lscr$ then $\Delta$ is locally finite,
so Remark \ref{03442952-E213-4085-B07A-FABC7408E85A} implies that $\LieAlg{\Delta}$ is locally finite.

\item {\it If $\dim_\bk(B)$ is countable then
each weakly locally finite Lie subalgebra of $\Der_\bk(B)$ is the union of a countable increasing sequence
$\Lscr_0 \subseteq \Lscr_1 \subseteq \Lscr_2 \subseteq \cdots$ of locally finite Lie subalgebras of $\Der_\bk(B)$.}

To see this, choose a finite subset $S$ of $B$ that satisfies condition $(*)$ of Definition \ref{0bg93mm5k6gnndrtuiecj8}(d), and let $n = |S|$.
Since $\Der_\bk(B) \to B^n$, $D \mapsto (Ds)_{s \in S}$, is an injective $\bk$-linear map, the dimension of $\Der_\bk(B)$ is countable.
If $\Lscr$ is a weakly locally finite Lie subalgebra of $\Der_\bk(B)$ then the dimension of $\Lscr$ is countable,
so we can choose a family $\{ D_i \}_{ i \in \Nat }$ of elements of $\Lscr$ that spans $\Lscr$ as a vector space.
Define $\Lscr_n = \LieAlg{ D_1,\dots,D_n }$ for each $n \in \Nat$, then $\Lscr = \bigcup_{n \in \Nat}\Lscr_n$
and each $\Lscr_n$ is locally finite by (b).

\end{enumerate}
\end{remarks}

\begin{remark} \label {AE2F4C43-C9A8-4A99-805E-F255752D6D30}
Assume that $\bk$ is an algebraically closed field of characteristic zero and that $X$ is an irreducible affine $\bk$-variety.
Then one can consider the subalgebra $\Lie( \Aut X )$  of $\Der_\bk( \Oscr_X(X) )$,
and Definition 2.5 of \cite{Arz_Zaid_2026} states that a Lie subalgebra of $\Lie( \Aut X )$ is called {\it locally integrable\/} if it admits
a countable ascending filtration by locally finite subalgebras.  We claim that if $\Lscr$ is a Lie subalgebra of $\Der_\bk( \Oscr_X(X) )$ then
$$
\text{$\Lscr$ is weakly locally finite $\iff$ $\Lscr$ is a locally integrable Lie subalgebra of $\Lie( \Aut X )$.}
$$
Indeed, $(\Leftarrow)$ is obvious. 
Conversely, if $\Lscr$ is weakly locally finite then $\Lscr \subseteq \lfd_\bk( \Oscr_X(X) ) \subseteq \Lie( \Aut X )$,
where the last inclusion is \cite[Cor.\ 7.6.4]{FurterKraft}.
Since $\Oscr_X(X)$ is derivation-finite by either one of parts (a) or (e) of \ref{E846360A-FB67-4DC0-8E96-92430F4D1817},
and $\dim_\bk\Oscr_X(X)$ is countable because $\Oscr_X(X)$ is finitely generated,
\ref{142CBB6B-6351-448D-8F1A-49C84484D46B}(c) implies that $\Lscr$ is locally integrable.
\end{remark}

\smallskip

Throughout the remainder of this section, $\bk$ denotes an arbitrary field and $B$ a derivation-finite $\bk$-algebra,
where ``$\bk$-algebra'' is understood in the general sense defined at the beginning of the section.
No assumptions on $B$ and $\bk$ are imposed beyond derivation-finiteness.

\smallskip

Recall that if $D : B \to B$ is a $\bk$-derivation then $\ad(D) : \Der_\bk(B) \to \Der_\bk(B)$ is the $\bk$-linear map $E \mapsto [D,E]$, $E \in \Der_\bk(B)$.
The following result is analogous to \cite[Thm 5]{Skutin_LND_2021} and recovers \cite[Lemma 1.7.1]{KraftZaidenberg2024} as a special case.

\begin{lemma} \label {4979E5BB-25FA-42A9-9B2E-47D08C07D92D}
Let $B$ be a derivation-finite algebra over a field $\bk$,
let $\Delta$ be a subset of $\Der_\bk(B)$
and consider the subset $\ad(\Delta) = \setspec{ \ad(D) }{ D \in \Delta }$ of $\Leul( \Der_\bk B )$.
\begin{enumerate}[\rm(a)]

\item If $\Delta$ is locally finite then $\ad(\Delta)$ is a locally finite subset of $\Leul( \Der_\bk B )$.

\item \label {4A1796C5-40AF-4BA4-BFC0-392A69D677C7}
If $\Delta$ is weakly locally finite then $\ad(\Delta)$ is a weakly locally finite subset of $\Leul( \Der_\bk B )$.

\end{enumerate}
\end{lemma}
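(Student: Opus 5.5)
Part (b) follows from part (a): if $\Delta$ is weakly locally finite and $\Delta_0$ is a finite subset of $\Delta$, then $\Delta_0$ is locally finite, so by (a) the finite set $\ad(\Delta_0)$ is locally finite. Every finite subset of $\ad(\Delta)$ has this form, so $\ad(\Delta)$ is weakly locally finite. The real work is therefore in (a).

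For (a), I would first fix a finite set $S \subseteq B$ satisfying condition $(*)$ of Definition \ref{0bg93mm5k6gnndrtuiecj8}(d). Let $E \in \Der_\bk(B)$ be given. The aim is to find a finite dimensional subspace of $\Der_\bk(B)$ that contains $E$ and is stable under every $\ad(D)$ with $D \in \Delta$.

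Since $\Delta$ is locally finite and $S \cup E(S)$ is finite, I choose a finite dimensional subspace $U \subseteq B$ with $S \cup E(S) \subseteq U$ and $D(U) \subseteq U$ for all $D \in \Delta$. (Local finiteness gives such a subspace for each element; taking the sum over the finitely many elements gives one for the whole finite set.)

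Now define
$$\mathcal{W} = \setspec{ F \in \Der_\bk(B) }{ F(U) \subseteq U }.$$
This subspace has three properties.
\begin{enumerate}[\rm(i)]
\item $\mathcal{W}$ is finite dimensional. The restriction map $\mathcal{W} \to \Hom_\bk(U,U)$ is linear. It is injective because a derivation vanishing on $U \supseteq S$ vanishes on $S$, hence is zero by $(*)$. The target is finite dimensional.
\item $\mathcal{W}$ is stable under $\ad(D)$ for each $D \in \Delta$. If $F \in \mathcal{W}$, then $[D,F](U) \subseteq D F(U) + F D(U) \subseteq U$, using $D(U) \subseteq U$ and $F(U) \subseteq U$.
\item $E$ need not lie in $\mathcal{W}$, since $E(U)$ may not be contained in $U$. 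This is the main obstacle.
\end{enumerate}

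To deal with (iii), I would use a different invariant subspace instead of $\mathcal{W}$ itself:
$$\mathcal{W}' = \setspec{ F \in \Der_\bk(B) }{ F(U) \subseteq U' },$$
where $U'$ is a larger finite dimensional $\Delta$-stable subspace of $B$ containing $U + E(U)$. Such a $U'$ exists by local finiteness of $\Delta$, because $U + E(U)$ is finite dimensional.

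This $\mathcal{W}'$ has all the required properties.
\begin{enumerate}[\rm(i)]
\item $E \in \mathcal{W}'$, since $E(U) \subseteq U'$ by the choice of $U'$.
\item $\mathcal{W}'$ is stable under $\ad(D)$ for $D \in \Delta$. For $F \in \mathcal{W}'$ we have $D F(U) \subseteq D(U') \subseteq U'$ and $F D(U) \subseteq F(U) \subseteq U'$. Hence $[D,F](U) \subseteq U'$.
\item $\mathcal{W}'$ is finite dimensional. The restriction map $\mathcal{W}' \to \Hom_\bk(U,U')$ is injective because $S \subseteq U$ and $(*)$ holds, and the target is finite dimensional.
\end{enumerate}
Thus $\mathcal{W}'$ is a finite dimensional subspace of $\Der_\bk(B)$ that contains $E$ and is invariant under $\ad(\Delta)$, which proves (a).

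Note that the requirement $E(S) \subseteq U$ in the choice of $U$ is not needed; only $S \subseteq U$ is used.
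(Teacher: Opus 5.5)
Your proof is correct and is essentially the paper's argument. The paper uses the same two $\Delta$-stable subspaces ($U \supseteq S$ and $W \supseteq E(U)+U$) and the same injective restriction map into $\Hom_\bk(U,W)$, applied to the span $\overline X$ of the iterated brackets $[\Delta,[\Delta,\dots,[\Delta,E]]]$, where it proves $X_n(U)\subseteq W$ by induction; your subspace $\mathcal{W}'=\{F\in\Der_\bk(B) : F(U)\subseteq U'\}$ contains that span, and your one-line check that $\mathcal{W}'$ is $\ad(\Delta)$-stable replaces the induction, which makes the presentation slightly cleaner.
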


\begin{proof}
To prove (a), suppose that $\Delta$ is locally finite.
Let $E \in \Der_\bk(B)$.
Define subsets $X_0,X_1,X_2,\dots$ of $\Der_\bk(B)$ by declaring that $X_0 = \{E\}$ and $X_{i+1} = [\Delta,X_i]$ for all $i \in \Nat$,
and let $X = \bigcup_{ i \in \Nat }X_i$. 
Since $E \in X$ and $(\ad D)(X) \subseteq X$ for all $D \in \Delta$,
it suffices to show that $\overline X = \Span_\bk(X)$ is a finite dimensional subspace of $\Der_\bk(B)$.
Since $B$ is derivation-finite, we can choose a finite subset $S$ of $B$ such that
\begin{equation} \tag{$*$}
\text{the only $D \in \Der_\bk(B)$ that satisfies $D(x)=0$ for all $x \in S$ is the zero derivation.}
\end{equation}
Since $\Delta$ is locally finite, there exists a finite dimensional subspace $U$ of $B$ such that $S \subseteq U$ and $\Delta U \subseteq U$.
Since $E(U)+U$ is finite dimensional and $\Delta$ is locally finite,
there exists a finite dimensional subspace $W$ of $B$ such that $E(U)+U \subseteq W$ and $\Delta W \subseteq W$.
We have $X_0(U) = E(U) \subseteq W$. Suppose that $n \in \Nat$ is such that $X_n(U) \subseteq W$.
Then $X_{n+1}U  = [\Delta,X_n] U \subseteq \Delta X_n U + X_n \Delta U \subseteq \Delta  W + X_n U \subseteq W$.
By induction, we get $X_{n}(U) \subseteq W$ for all $n \in \Nat$, so $X(U) \subseteq W$ and hence $\overline X U \subseteq W$.
Consider the $\bk$-linear map $\Phi : \overline X \to \Leul(U,W)$, where given $D \in \overline X$ we define $\Phi(D) = D|_U : U \to W$.
Since $S \subseteq U$, condition $(*)$ implies that $\Phi$ is injective.
Since $\Leul(U,W)$ is finite dimensional, so is $\overline X$. This proves (a), and (b) is an obvious consequence of (a).
\end{proof}

\begin{lemma} \label {104779FE-B0E2-4AD0-BB60-6BC97D307EB9}
Let $B$ be a derivation-finite algebra over a field $\bk$ and
let $\Delta$ be a weakly locally finite subset of $\Der_\bk(B)$.
If $D \in \lfd_\bk(B)$ and $[D,\Delta] \subseteq \Delta$, then $\{D\} \cup \Delta$ is weakly locally finite.
\end{lemma}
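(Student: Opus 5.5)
We must show that every finite subset of $\{D\}\cup\Delta$ is locally finite. It suffices to treat subsets of the form $\{D\}\cup\Delta_0$ with $\Delta_0\subseteq\Delta$ finite. The plan is to enlarge $\Delta_0$ to a set that is still locally finite and is stable under $\ad(D)$, and then apply Lemma \ref{BB8F3187-088A-4A77-9DA8-3CF3D585A95B} with $T=D$.

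\emph{Step 1: a finite-dimensional $\ad(D)$-stable space.} Since $D\in\lfd_\bk(B)$, the singleton $\{D\}$ is locally finite, so Lemma \ref{4979E5BB-25FA-42A9-9B2E-47D08C07D92D}(a) shows that $\ad(D)$ is a locally finite element of $\Leul(\Der_\bk B)$. Hence the space
$$
\Span_\bk\{\ad(D)^j(F) : F\in\Delta_0,\ j\in\Nat\}
$$
is finite dimensional. Consider the set $\Delta_1=\{\ad(D)^j(F) : F\in\Delta_0,\ j\in\Nat\}$. Because $[D,\Delta]\subseteq\Delta$, an induction on $j$ gives $\Delta_1\subseteq\Delta$. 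Since its span is finite dimensional, we can choose a finite subset $\Delta_2\subseteq\Delta_1$ spanning the same space $V_1:=\Span_\bk(\Delta_1)$. By construction $V_1\supseteq\Delta_0$ and $[D,V_1]\subseteq V_1$.

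\emph{Step 2: local finiteness of $V_1$.} The set $\Delta_2$ is a finite subset of $\Delta$, and $\Delta$ is weakly locally finite, so $\Delta_2$ is locally finite. By Remark \ref{03442952-E213-4085-B07A-FABC7408E85A}, $\Span_\bk(\Delta_2)=V_1$ is therefore locally finite.

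\emph{Step 3: conclusion.} Apply Lemma \ref{BB8F3187-088A-4A77-9DA8-3CF3D585A95B} with $V=B$, $T=D$, and the locally finite set $V_1$; its hypothesis $[T,V_1]\subseteq V_1$ holds by Step 1. The lemma gives that $\{D\}\cup V_1$ is locally finite. Since $\{D\}\cup\Delta_0\subseteq\{D\}\cup V_1$, the set $\{D\}\cup\Delta_0$ is locally finite, as required.

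The only substantive step is Step 1. It depends on derivation-finiteness, through Lemma \ref{4979E5BB-25FA-42A9-9B2E-47D08C07D92D}, which guarantees that the $\ad(D)$-orbit of the finite set $\Delta_0$ spans a finite-dimensional space. The remaining point is routine bookkeeping: we replace the possibly infinite set $\Delta_1$ by the finite spanning subset $\Delta_2$, so that weak local finiteness of $\Delta$ can be applied.
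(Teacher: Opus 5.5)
Your proof is correct and is essentially the paper's argument. Lemma \ref{4979E5BB-25FA-42A9-9B2E-47D08C07D92D}(a) makes $\ad(D)$ locally finite, which gives a finite-dimensional $\ad(D)$-stable subspace containing the finite set and spanned by finitely many elements of $\Delta$, hence locally finite, and Lemma \ref{BB8F3187-088A-4A77-9DA8-3CF3D585A95B} then concludes. The only cosmetic difference is that the paper first replaces $\Delta$ by $\Span_\bk(\Delta)$, whereas you take the span of the $\ad(D)$-orbit of $\Delta_0$ directly and so avoid that reduction.
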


\begin{proof}
By Remark \ref{03442952-E213-4085-B07A-FABC7408E85A}, $\bar \Delta = \Span_\bk(\Delta)$ is weakly locally finite.
Moreover, $[D,\Delta] \subseteq \Delta$ implies $[D,\bar\Delta] \subseteq \bar\Delta$.
So we may (and shall) assume that $\Delta$ is a linear subspace of $\Der_\bk(B)$.

Let $S$ be a finite subset of $\Delta$. We have to show that $\{D\} \cup S$ is locally finite.
By Lemma \ref{4979E5BB-25FA-42A9-9B2E-47D08C07D92D}(a), the linear map  $\ad(D) : \Der_\bk(B) \to \Der_\bk(B)$ is locally finite.
Since $[D,\Delta] \subseteq \Delta$, $\ad(D)$ maps $\Delta$ into itself; so the linear map $\ad(D) : \Delta \to \Delta$ is locally finite.
This implies that there exists a finite dimensional subspace $\Delta'$ of $\Delta$ such that $S \subseteq \Delta'$ and $(\ad D)(\Delta') \subseteq \Delta'$,
i.e., $[D,\Delta'] \subseteq \Delta'$.
If $\Bscr$ is a basis of $\Delta'$ then $\Bscr$ is a finite subset of $\Delta$, so $\Bscr$ is locally finite;
by Remark \ref{03442952-E213-4085-B07A-FABC7408E85A}, it follows that $\Span_\bk(\Bscr) = \Delta'$ is a locally finite subset of $\Leul(B)$.
So Lemma \ref{BB8F3187-088A-4A77-9DA8-3CF3D585A95B} implies that $\{D\} \cup \Delta'$ is locally finite, 
and hence that $\{D\} \cup S$ is locally finite. This proves that $\{D\} \cup \Delta$ is weakly locally finite.
\end{proof}

The following trivial observation is useful in conjunction with Zorn's Lemma:

\begin{lemma}  \label {DEA84E5E-79BC-4D4B-BF2D-C5735262EA6D}
Let $B$ be an algebra over a field $\bk$ and let $\Xscr$  be a nonempty collection of subsets of $\Der_\bk(B)$ that is totally ordered by inclusion.
If each element of $\Xscr$ is a weakly locally finite subset of $\Der_\bk(B)$ then so is $\bigcup_{X \in \Xscr} X$.
\end{lemma}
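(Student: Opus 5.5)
The plan is to check the definition of weak local finiteness directly: it suffices to show that every finite subset of $Y := \bigcup_{X \in \Xscr} X$ is locally finite. So let $F = \{D_1, \dots, D_m\}$ be a finite subset of $Y$. If $m = 0$, then $F$ is the empty set, which is trivially locally finite: given $b \in B$, take $U = \bk b$. This case also uses that $\Xscr$ is nonempty only in the sense that $Y$ is well defined.

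For $m \ge 1$, I choose for each $i$ some $X_i \in \Xscr$ with $D_i \in X_i$. Since $\Xscr$ is totally ordered by inclusion, the finite family $X_1, \dots, X_m$ has a largest member, say $X_k$, found by an induction on $m$ comparing two members at a time. Then $F \subseteq X_k$.

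Because $X_k$ is weakly locally finite and $F$ is a finite subset of $X_k$, the set $F$ is locally finite. Hence $Y$ is weakly locally finite. No step is a real obstacle; the only point to note is that the argument uses finiteness of $F$ to obtain a single member of the chain containing $F$.
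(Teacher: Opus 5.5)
Your proof is correct and is exactly the argument the paper leaves implicit: the paper calls this lemma a trivial observation and omits the proof, the point being that a finite subset of the union of a chain lies in a single member of the chain. As a small aside, the nonemptiness of $\Xscr$ is not needed for the lemma itself, since an empty union is the empty set; it is there only for the Zorn's Lemma argument in the proof of the Theorem, where the union must contain $S_n$.
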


\medskip

\noindent{\bf Proof of the Theorem.}
A straightforward proof by induction shows that (a) is true.
To prove (b), suppose that $n>0$ is such that $S_n$ is weakly locally finite.
Consider the set $\Sigma$ of all sets $X$ such that $S_{n} \subseteq X \subseteq S_{n-1}$ and $X$ is weakly locally finite.
By Zorn's Lemma and Lemma \ref{DEA84E5E-79BC-4D4B-BF2D-C5735262EA6D}, there exists a maximal element $X_*$ of $\Sigma$.
For any $D \in S_{n-1}$, we have $[D,X_*] \subseteq [D,S_{n-1}] \subseteq S_{n} \subseteq X_*$;
since $X_*$ is weakly locally finite and $D \in S_{n-1} \subseteq S \subseteq \lfd_\bk(B)$,
Lemma \ref{104779FE-B0E2-4AD0-BB60-6BC97D307EB9} implies that $\{D\} \cup X_*$ is weakly locally finite,
so $\{D\} \cup X_* \in \Sigma$ and hence $D \in X_*$ by maximality of $X_*$.
This shows that $S_{n-1} \subseteq X_*$ and hence that $S_{n-1}$ is weakly locally finite.
We proved that if $S_{n}$ is weakly locally finite then so is $S_{n-1}$.
It follows by induction that $S = S_0$ is weakly locally finite, as desired.
\hfill\qedsymbol

\bigskip

\noindent{\bf Proof of Corollary \ref{21AFB0A0-0700-4E7F-8AE2-D3557F9656CC}.}
(a) Let $S = \Lscr$, define $(S_i)_{i \in \Nat}$ as in the Theorem,
and note that $S_i \subseteq \Lscr^{(i)}$ for all $i \in \Nat$.
Choose $n$ such that $\Lscr^{(n)}$ is weakly locally finite;
then $S_n$ is weakly locally finite, so $\Lscr$ is weakly locally finite by part (b) of the Theorem.
This proves (a).
In part (b) there exists $n \in \Nat$ such that $\Lscr^{(n)} = \{0\}$, so (b) follows from (a).
\hfill\qedsymbol

\bigskip

\noindent{\bf Proof of Corollary \ref{CFBD926C-B48D-4238-8595-F09AED548518}.}
Let $S$ denote the generating set mentioned in the statement.
By part (b) of the Theorem, $S$ is weakly locally finite; by Remark \ref{03442952-E213-4085-B07A-FABC7408E85A}, so is $\Lscr$.
\hfill\qedsymbol

\bigskip

\noindent{\bf Proof of Corollary \ref{CA556DEE-FE7D-4364-A217-A983EBE59700}.}
Note that $S = \Delta \cup \Lscr$ is a subset of $\lfd_\bk(B)$ that satisfies $[S,S] \subseteq \Lscr$;
in particular, $S$ is closed under the Lie bracket.
Define $(S_i)_{i \in \Nat}$ as in the Theorem.
Then $S_1 \subseteq \Lscr$, so $S_1$ is a weakly locally finite set.
By part (b) of the Theorem, $S$ is a weakly locally finite set,
so Remark \ref{03442952-E213-4085-B07A-FABC7408E85A} implies that $\Mscr = \LieAlg{S}$ is weakly locally finite.
Since $[S,S] \subseteq \Lscr$, we obtain $[\Mscr,\Mscr] \subseteq \Lscr$.
\hfill\qedsymbol

\bigskip

\noindent{\bf Proof of Corollary \ref{8B9F8C8D-5E7E-44EA-8762-9426050B1F90}.}
By \ref{E846360A-FB67-4DC0-8E96-92430F4D1817}(f), the $\bk$-algebra $\Oscr_X(X)$ is derivation-finite,
so part (b) of Corollary \ref{21AFB0A0-0700-4E7F-8AE2-D3557F9656CC} implies that $\Lscr$ is weakly locally finite.
Let $X = \bigcup_{i=1}^n U_i$ be a finite covering by affine open sets.
For each $i$, $\Oscr_X(U_i)$ is a finitely generated $\bk$-algebra and hence a $\bk$-vector space of countable dimension.
Since $\Oscr_X(X) \to \prod_{i=1}^n \Oscr_X(U_i)$, $f \mapsto (f|_{U_i})_{i=1}^n$ is an injective $\bk$-linear map,
$\Oscr_X(X)$ has countable dimension. By \ref{142CBB6B-6351-448D-8F1A-49C84484D46B}(c),
$\Lscr$ admits a countable increasing filtration by locally finite Lie subalgebras.
\hfill\qedsymbol

\section{An example}

The aim of this section is to prove the following statement, which shows that if we do not assume that $B$ is derivation-finite then
a finitely generated Lie subalgebra of $\Der_\bk(B)$ may consist of locally finite derivations without being weakly locally finite:

\begin{proposition} \label {07BB2397-08C6-4943-A81A-2FC5730BAD29}
Let $B$ be the commutative polynomial ring in $| \Nat |$ variables over a field $\bk$.
For each integer $m\ge2$, there exists a Lie subalgebra $\Lscr_m$ of $\Der_\bk(B)$ with the following properties.
\begin{enumerate}[\rm(a)]

\item $\Lscr_m$ is $m$-generated as a Lie algebra.

\item $\Lscr_m \subseteq \lnd(B) \subseteq \lfd_\bk(B)$ and $\Lscr_m$ is not a weakly locally finite subset of $\Der_\bk(B)$.

\item Every $(m-1)$-generated Lie subalgebra of $\Lscr_m$ is a nilpotent Lie algebra,
a locally nilpotent subset of $\Der_\bk(B)$, and a locally finite subset of $\Der_\bk(B)$.

\end{enumerate}
\end{proposition}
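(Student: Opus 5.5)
The plan is to build the $\Lscr_m$ explicitly from \emph{triangular} derivations, so that local nilpotency of every element of $\Lscr_m$ is automatic, while the joint action of the $m$ generators escapes every finite dimensional subspace. Write $B=\bk[x_0,x_1,x_2,\dots]$ with the variables well ordered by their index. Call $D\in\Der_\bk(B)$ triangular if $D(x_n)\in\bk[x_0,\dots,x_{n-1}]$ for every $n$.

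The first step is to record two facts about triangular derivations.
\begin{enumerate}[\rm(i)]
\item Each triangular derivation is locally nilpotent. Any $f\in B$ lies in some $\bk[x_0,\dots,x_N]$, this subring is stable under $D$, and the usual induction on $N$ applies.
\item Triangular derivations form a Lie subalgebra of $\Der_\bk(B)$, since $[D,E](x_n)\in\bk[x_0,\dots,x_{n-1}]$.
\end{enumerate}
Hence any Lie algebra generated by triangular derivations automatically satisfies $\Lscr_m\subseteq\lnd(B)\subseteq\lfd_\bk(B)$. This settles the first half of (b) and reduces everything to the choice of generators.

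Next I choose generators $D_1,\dots,D_m$ using the index set $\Nat$ split into $m$ interleaved blocks. The rule for $D_j$ is to send $x_n$ to a monomial in lower variables only when $n$ lies in a certain residue pattern involving the letter $j$, and to kill $x_n$ otherwise. The intended behaviour is as follows. Any word in at most $m-1$ of the letters can only move ``downward'' a bounded number of steps before hitting a variable killed by all the letters used. With all $m$ letters one can cycle through the pattern indefinitely, and the coefficients are chosen with growing degree, e.g.\ $D_j(x_n)=x_{n-1}x_{n-2}$ or $x_{n-1}^2$ on the active indices. For (b), I would exhibit one element $f$, for instance a suitable variable $x_N$ or a product $x_0x_1$, and show that the span of $\{\,w(f)\,\}$, with $w$ ranging over words in $D_1,\dots,D_m$, contains elements of unbounded degree. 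Then $\{D_1,\dots,D_m\}$ is not locally finite, so $\Lscr_m$ is not weakly locally finite. Part (a) then holds by definition.

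For (c), let $\Mscr$ be generated by $E_1,\dots,E_{m-1}\in\Lscr_m$. The key point is that each element of $\Lscr_m$ is a combination of brackets of the $D_j$, and the bracket structure is governed by a grading of $\Lscr_m$ by $\Nat^m$, the multidegree in the generators. The $m-1$ elements $E_i$ have components in only finitely many multidegrees. I would show that any bracket whose multidegree is large in all coordinates vanishes on the generators, except when all $m$ letters are used cyclically; this forces nilpotency of $\Mscr$ as a Lie algebra. Local nilpotence of $\Mscr$ as a set, meaning that for each $f$ all sufficiently long words kill $f$, should follow from the same bounded-descent argument, and local finiteness follows from local nilpotence.

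I expect the main obstacle to be (c) rather than (b). The elements $E_i$ are arbitrary elements of $\Lscr_m$, not just $m-1$ of the $D_j$, and a general $(m-1)$-generated subalgebra can involve all $m$ letters through brackets. The count that needs care is that $m-1$ elements cannot reproduce the full cycle infinitely often. I would first attempt this by a rank argument on the abelianization: $\Lscr_m/[\Lscr_m,\Lscr_m]$ has dimension $m$, so $m-1$ elements generate a subalgebra whose image there misses a direction. I would then try to show that the construction makes any subalgebra with this property nilpotent. If this approach fails, the fallback is to redesign the $D_j$ so that every bracket of length $\ge 2$ is nilpotent of bounded step, and argue directly.
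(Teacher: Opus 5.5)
There is a fatal gap in (b). The triangularity you use to get $\Lscr_m\subseteq\lnd(B)$ for free also forces $\{D_1,\dots,D_m\}$, and hence $\Lscr_m$, to be locally finite. So no choice of residue patterns or coefficients can work.

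To see this, let $D_1,\dots,D_m$ be triangular and let $f\in\bk[x_0,\dots,x_N]$. This subring is stable under every $D_j$. Choose positive integer weights inductively with $w_0=1$ and $w_n>\deg_w D_j(x_n)$ for all $j$, where $\deg_w$ is computed with $w_0,\dots,w_{n-1}$. This is possible because $D_j(x_n)\in\bk[x_0,\dots,x_{n-1}]$ and there are only finitely many $j$. For a monomial $M$ in $x_0,\dots,x_N$, every term of $D_j(M)=\sum_i (\partial M/\partial x_i)\,D_j(x_i)$ has weighted degree $<\deg_w M$.

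Hence $U=\{g\in\bk[x_0,\dots,x_N] : \deg_w g\le \deg_w f\}$ is finite dimensional, contains $f$, and is stable under all $D_j$. In fact every word of length $>\deg_w f$ in the $D_j$ kills $f$. So $\{D_1,\dots,D_m\}$ is locally finite (even uniformly locally nilpotent), and $\Lscr_m=\LieAlg{D_1,\dots,D_m}$ is locally finite by Remark \ref{03442952-E213-4085-B07A-FABC7408E85A}. The same argument shows that the Lie algebra of all triangular derivations is weakly locally finite.

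Coefficients such as $x_{n-1}^2$ or $x_{n-1}x_{n-2}$ only let the ordinary degree grow for a while; it stays bounded by $\deg_w f$. No word can ``cycle through the pattern indefinitely'', because each letter strictly lowers $\deg_w$. So the element $f$ with words of unbounded degree that you planned to exhibit does not exist.

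What is missing is a non-triangular source of non-local-finiteness that comes with nilpotency of \emph{arbitrary} $(m-1)$-element subsets of $\Lscr_m$. That second requirement is exactly the obstacle you flag in (c), and your abelianization count does not resolve it: a subalgebra whose image misses a direction in $\Lscr_m/[\Lscr_m,\Lscr_m]$ need not be nilpotent. For example, $m-1\ge 2$ free generators of a free Lie algebra generate a non-nilpotent subalgebra.

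The paper gets both properties from Golod's counterexample to the Kurosh problem: an infinite dimensional associative algebra $A$ generated by $g_1,\dots,g_m$ in which every $(m-1)$-subset is nilpotent.
\begin{itemize}
\item With $\phi$ the left regular representation, $\{\phi(g_1),\dots,\phi(g_m)\}$ is not locally finite, since some $Ag_j$ is infinite dimensional.
\item The Lie algebra $L$ these generate is carried into $\Der_\bk(B)$ as linear derivations by Lemma \ref{pcv0b9i349r09f}, which preserves local finiteness and local nilpotency in both directions.
\item Any elements $\phi(a_1),\dots,\phi(a_{m-1})$ of $L$ generate a nilpotent associative algebra, because $\{a_1,\dots,a_{m-1}\}$ is nilpotent in $A$.
\end{itemize}
The last point yields (c) and $\Lscr_m\subseteq\lnd(B)$, via Example 5.6 of Daigle's paper and Remark \ref{3142A156-11E1-4BE7-BB0C-BA4E9224347D}.
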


Here, $\lnd(B)$ denotes the set of locally nilpotent derivations of $B$.
A subset $\Delta$ of $\Der_\bk(B)$ is {\it locally nilpotent\/} if, for each $x \in B$ and each infinite sequence $(D_1,D_2,\dots)$ of 
elements of $\Delta$, there exists $n>0$ such that $(D_n \circ \cdots \circ D_2 \circ D_1)(x)=0$.
Following \cite[Def.\ 2.1]{Daigle_LNSets_2020}, we say that $\Delta$ is {\it uniformly locally nilpotent\/} if 
for each $x \in B$ there exists $n>0$ such that $(D_n \circ \cdots \circ D_1)(x)=0$ for all $(D_1,\dots,D_n) \in \Delta^n$.

\begin{remark} \label {3142A156-11E1-4BE7-BB0C-BA4E9224347D}
{\it Let $B$ be an algebra over a field $\bk$.
If $\Lscr$ is a finitely generated Lie subalgebra of $\Der_\bk(B)$ and a locally nilpotent subset of $\Der_\bk(B)$,
then it is a locally finite subset of $\Der_\bk(B)$.}
Indeed, let $G$ be a finite generating set of $\Lscr$.
Then \cite[Lemma 2.2]{Daigle_LNSets_2020} implies that $G$ is uniformly locally nilpotent;
so, for each $x \in B$, the set $M_x = \setspec{  (D_r \circ \cdots \circ D_1)(x) }{ r \ge 0 \text{ and } D_1, \dots, D_r \in G }$ is finite
and consequently $\Span_\bk(M_x)$ is a finite dimensional subspace of $B$ that contains $x$ and is stable under $G$.
This shows that $G$ is locally finite, and so is $\Lscr$ by Remark \ref{03442952-E213-4085-B07A-FABC7408E85A}.
\end{remark}

\begin{remark}  \label {8707A8D6-41F8-4B7E-9EEC-460E290C67C5}
Let $B$ be an algebra over a field $\bk$.
Given a subset $\Delta$ of $\Der_\bk(B)$, let $\locfin( \Delta, B )$ denote the set of elements $x \in B$
for which there exists a finite dimensional $\bk$-subspace $U$ of $B$ such that $x \in U$ and $D(U) \subseteq U$ for all $D \in \Delta$.
It is straightforward to verify that $\locfin( \Delta, B )$ is a $\bk$-subalgebra of $B$.
Clearly, $\Delta$ is a locally finite subset of $\Der_\bk(B)$ if and only if $\locfin(\Delta, B)=B$.
\end{remark}

\begin{lemma} \label {pcv0b9i349r09f}
Let $V$ be a vector space over a field $\bk$ and $B$ a commutative polynomial ring over $\bk$ satisfying $\trdeg_\bk(B) = \dim_\bk(V)$.
Then there exist an injective $\bk$-linear map $\nu : V \to B$
and an injective homomorphism of Lie algebras $\psi : \Leul_\bk(V) \to \Der_\bk(B)$ such that $\bk[ \image\nu] = B$ and 
\begin{equation}  \label {0c2i3urv98703u4t8934jgs9}
\text{for every $F \in \Leul_\bk(V)$, the diagram} \qquad
\raisebox{7mm}{\xymatrix{
B  \ar[r]^{ \psi(F) }  &  B  \\
V \ar[u]^\nu \ar[r]_F  &   V \ar[u]_\nu
}}
\qquad \text{commutes.}
\end{equation}
Moreover, the following statements are true for every subset $\Delta$ of $\Leul_\bk(V)$:
\begin{enumerate}[\rm(a)]

\item $\Delta$ is a locally nilpotent (resp.\ uniformly locally nilpotent) subset of $\Leul_\bk(V)$
if and only if $\psi(\Delta)$ is a locally nilpotent (resp.\ uniformly locally nilpotent) subset of $\Der_\bk(B)$.

\item $\Delta$ is a locally finite (resp.\ weakly locally finite) subset of $\Leul_\bk(V)$
if and only if $\psi(\Delta)$ is a locally finite (resp.\ weakly locally finite) subset of $\Der_\bk(B)$.

\end{enumerate}
\end{lemma}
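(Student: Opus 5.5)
Choose a basis $\{v_i\}_{i\in I}$ of $V$. Since $\trdeg_\bk(B)=\dim_\bk(V)$ and $B$ is a polynomial ring, we may write $B=\bk[\{x_i\}_{i\in I}]$ with the variables indexed by the same set $I$. Define $\nu$ as the linear map with $\nu(v_i)=x_i$. It is injective, because the variables are linearly independent, and $\bk[\image\nu]=B$. Since $B=\operatorname{Sym}(V)$ via $\nu$, every $F\in\Leul_\bk(V)$ extends uniquely to a $\bk$-derivation $\psi(F)$ of $B$ such that $\psi(F)\circ\nu=\nu\circ F$. This is the standard fact that a derivation of a polynomial ring may be prescribed arbitrarily on the variables; explicitly,
$$
\psi(F)=\sum_i \nu(F v_i)\,\partial/\partial x_i ,
$$
which is well defined because each element of $B$ involves only finitely many variables. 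The map $\psi$ is linear, and it is injective because $\psi(F)$ restricted to $\nu(V)$ recovers $F$. It is a Lie homomorphism because $[\psi(F),\psi(G)]$ and $\psi([F,G])$ are both derivations that agree on the generating set $\nu(V)$, where each acts as $\nu\circ[F,G]$. This settles \eqref{0c2i3urv98703u4t8934jgs9}.

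For (a) and (b), the key tool is the grading $B=\bigoplus_{d\ge0}B_d$ by total degree, with $B_1=\nu(V)$. Every $\psi(F)$ is homogeneous of degree $0$. Moreover, $B_d$ is spanned by products $\nu(w_1)\cdots\nu(w_d)$, and by the Leibniz rule $\psi(F)$ acts on such a product by $\sum_j \nu(w_1)\cdots\nu(Fw_j)\cdots\nu(w_d)$.

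The direction ``$\psi(\Delta)$ has the property $\Rightarrow$ $\Delta$ has it'' is easy. For the local nilpotence notions, apply the property to $x=\nu(v)$ and use that $\nu$ is injective and intertwines the maps. For local finiteness, take $x=\nu(v)$ and a finite dimensional $\psi(\Delta)$-stable subspace $U\ni x$. Each homogeneous component projection commutes with every $\psi(F)$, so the degree-$1$ part $\pi_1(U)$ is still finite dimensional, stable, and contains $\nu(v)$. Then $\nu^{-1}(\pi_1 U)$ is the required subspace of $V$. The weak version follows by applying this to finite subsets.

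The converse direction is the main step. For local finiteness, given $b\in B$, write $b$ as a polynomial in finitely many $\nu(w)$. For each such $w$ pick a finite dimensional $\Delta$-stable $U_w\ni w$, and let $U$ be their sum, which is finite dimensional and $\Delta$-stable. Then $\bk[\nu(U)]_{\le N}$, the polynomials of degree at most $N$ in $\nu(U)$, is finite dimensional, contains $b$ for $N$ large, and is stable under $\psi(\Delta)$ by the Leibniz formula above. Alternatively one can invoke Remark~\ref{8707A8D6-41F8-4B7E-9EEC-460E290C67C5}: $\locfin(\psi(\Delta),B)$ is a subalgebra containing $\nu(V)$, hence equals $B$. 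The weak version again follows by passing to finite subsets, since $\psi$ is injective and so finite subsets correspond.

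For local nilpotence, I expect the subtler argument. Take a monomial $b=\nu(w_1)\cdots\nu(w_d)$ and a sequence $(D_k)=(\psi(F_k))$. The composite $D_n\cdots D_1(b)$ expands as a sum over ways of distributing the operators $F_k$ among the $d$ factors, with each factor receiving an ordered subsequence. I would use a König's-lemma style or pigeonhole argument to show that, for $n$ large enough, every distribution gives some factor a long enough subsequence to kill it. For the uniform version this is immediate: if $n_j$ kills $w_j$ uniformly, then $n=\sum_j n_j$ works, because some factor must receive at least $n_j$ operators. For the non-uniform version the subsequence assigned to a factor is not consecutive, so I would first reduce to the uniform case, or argue via the fact that locally nilpotent sets closed under subsequences behave well (cf.\ \cite{Daigle_LNSets_2020}). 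This handling of non-consecutive subsequences is the step I expect to be the obstacle.
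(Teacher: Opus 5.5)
Your construction of $\nu$ and $\psi$, your proof of (b), and the uniform half of (a) are correct. The paper proves only (b) itself, and it does so along your "alternative" line: $\locfin(\psi(\Delta),B)$ is a subalgebra containing $\nu(V)$, hence equals $B$. Your degree-one projection is a valid way to make precise the converse that the paper calls "reversible". The construction and part (a) are simply quoted in the paper from \cite[Lemma 5.1]{Daigle_LNSets_2020}.

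\emph{The gap is the implication "$\Delta$ locally nilpotent $\Rightarrow$ $\psi(\Delta)$ locally nilpotent", which you leave open.} Reducing to the uniform case cannot work, because a locally nilpotent set need not be uniformly locally nilpotent. For example, take a basis $e_0$, $u_{n,i}$ ($1\le i\le n$) of $V$. Let $F_n(e_0)=u_{n,1}$, $F_n(u_{n,i})=u_{n,i+1}$ for $i<n$, and let $F_n$ kill $u_{n,n}$ and every $u_{m,i}$ with $m\neq n$. Then $\{F_1,F_2,\dots\}$ is locally nilpotent, yet $F_n^{\,n}(e_0)\neq0$ for every $n$.

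Conversely, the non-consecutiveness you worry about is harmless. A subsequence of a sequence in $\Delta$ is again a sequence in $\Delta$, so local nilpotence applies to it directly. What is missing is a compactness step, as follows.
\begin{itemize}
\item Fix $w_1,\dots,w_d\in V$ and a sequence $(F_k)_{k\ge1}$ in $\Delta$. For $\sigma:\{1,\dots,n\}\to\{1,\dots,d\}$, let $F^\sigma_j$ be the composite of the $F_k$ with $k\in\sigma^{-1}(j)$, taken in increasing order of $k$. Then $\psi(F_n)\cdots\psi(F_1)\big(\nu(w_1)\cdots\nu(w_d)\big)=\sum_\sigma\prod_j\nu(F^\sigma_j w_j)$.
\item The maps $\sigma$ with $F^\sigma_j w_j\neq0$ for all $j$ are closed under restriction. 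They therefore form a rooted tree in which every node has at most $d$ children.
\item If this tree were infinite, K\"onig's lemma would give $\sigma:\Nat_{>0}\to\{1,\dots,d\}$ all of whose finite restrictions survive. Some fibre $\sigma^{-1}(j)=\{k_1<k_2<\cdots\}$ is infinite. Then $(F_{k_1},F_{k_2},\dots)$ is a sequence in $\Delta$ with $F_{k_r}\cdots F_{k_1}(w_j)\neq0$ for all $r$, which is a contradiction.
\item Hence for some $n$ every term of the sum vanishes. A general $b\in B$ is then handled by taking the maximum over its finitely many monomials, since once a composite kills an element, all longer composites do too.
\end{itemize}
With this step added, or by citing \cite[Lemma 5.1]{Daigle_LNSets_2020} for (a) as the paper does, your proof is complete.
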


\begin{proof}
Except for assertion (b), this is \cite[Lemma 5.1]{Daigle_LNSets_2020}.
To prove (b), consider a subset $\Delta$ of $\Leul_\bk(V)$.
Given $\bk$-linear subspaces $U \subseteq V$ and $W \subseteq B$, write
$\Delta U = \setspec{ F(u) }{ F \in \Delta \text{ and } u \in U }$
and $\psi(\Delta) W = \setspec{ D(w) }{ D \in \psi(\Delta) \text{ and } w \in W }$.
Then the fact that \eqref{0c2i3urv98703u4t8934jgs9} commutes implies that
$\nu\big( \Delta U \big) = \psi(\Delta) \nu(U)$ for every subspace $U \subseteq V$.

If $\Delta$ is locally finite then  $V$ is a union of finite dimensional subspaces $U$ of $V$ satisfying $\Delta U \subseteq U$.
For each such $U$ we have $\psi(\Delta) \nu(U) = \nu\big( \Delta U \big) \subseteq \nu(U)$,
so $\nu(V)$ is a union of finite dimensional subspaces $W$ of $B$ satisfying $\psi(\Delta) W \subseteq W$,
i.e., $\nu(V) \subseteq \locfin( \psi(\Delta), B )$.
Since $B = \bk[ \nu(V) ]$ and (by Remark \ref{8707A8D6-41F8-4B7E-9EEC-460E290C67C5}) $\locfin( \psi(\Delta) , B )$ is a subalgebra of $B$,
it follows that $\locfin( \psi(\Delta) , B ) = B$ and hence that $\psi(\Delta)$ is locally finite.
It is quite clear that this argument is reversible, so $\Delta$ is locally finite  if and only if $\psi(\Delta)$ is.
It then easily follows that  $\Delta$ is weakly locally finite  if and only if $\psi(\Delta)$ is.
This proves (b).
\end{proof}

\begin{nothing*}{\bf Proof of Proposition \ref{07BB2397-08C6-4943-A81A-2FC5730BAD29}.}
The following statement is valid over an arbitrary field $\bk$ and follows from Golod's famous counter-example to the Kurosh problem
(\cite{Golod68}, cited in \cite[Thm 6.2.9]{RowenBookVol2}):
\begin{quote} 
\it
For each integer $m \ge2$, there exists an infinite dimensional associative $\bk$-algebra $A$ generated by $m$ elements
such that every $(m-1)$-subset of $A$ is nilpotent.\footnote{A subset $H$ of $A$ is \textit{nilpotent} if there 
exists $n>0$ such that $h_n \cdots h_1=0$ for all $(h_1, \dots, h_n) \in H^n$.}
Moreover, $\bigcap_{n=1}^\infty A^n = \{0\}$.
\end{quote}
Let $\bk$ be a field, let $m\ge2$, and choose a $\bk$-algebra $A$ as in the above statement.
Let $\phi : A \to \Leul_\bk(A)$ be the $\bk$-linear map that sends each $a \in A$ to the $\bk$-linear map $\phi(a) : A \to A$, $x \mapsto ax$.
Since $A$ is associative, $\phi(a_1a_2) = \phi(a_1) \circ \phi(a_2)$ for all $a_1,a_2 \in A$, i.e., $\phi$ is a homomorphism of associative algebras
and $\phi(A)$ is an associative subalgebra of $\Leul_\bk(A)$.
Let $\{g_1,\dots,g_m\}$ be a generating set of $A$.
Then 
$$
A \ = \ \sum_{i=1}^m \bk g_i \ + \!\! \sum_{ 1 \le i,j \le m} \!\!\! \bk g_i g_j + \cdots \ \  =  \ \ \sum_{i=1}^m \bk g_i + \sum_{i=1}^m A g_i ,
$$
so the fact that $A$ is infinite dimensional implies that there exists $j \in \{1,\dots,m\}$ such that $A g_j$ is infinite dimensional.
Since $\setspec{ F(g_j) }{ F \in \phi(A) } = \setspec{ [\phi(a)](g_j) }{ a \in A } = Ag_j$, this shows that $\phi(A)$ is not a locally finite subset of $\Leul_\bk(A)$.
Since $\phi(A) = \AssAlg{\phi(g_1),\dots,\phi(g_m)}$,
Remark \ref{03442952-E213-4085-B07A-FABC7408E85A} implies that  $\{\phi(g_1),\dots,\phi(g_m)\}$ is not locally finite.
So
\begin{equation}  \label {A78830BE-0722-4BFF-A60D-6A27672F3A08}
\begin{minipage}{.7\textwidth}
the Lie subalgebra $L := \LieAlg{\phi(g_1),\dots,\phi(g_m)}$ of $\Leul_\bk(A)$ is not a weakly locally finite subset of $\Leul_\bk(A)$.
\end{minipage}
\end{equation}
Let $B$ be the commutative polynomial ring in $| \Nat |$ variables over $\bk$.
Since $A$ is infinite dimensional and finitely generated, we have $\dim_\bk(A) = |\Nat| = \trdeg_\bk(B)$, so Lemma \ref{pcv0b9i349r09f} implies that 
there exist an injective $\bk$-linear map $\nu : A \to B$
and an injective homomorphism of Lie algebras $\psi : \Leul_\bk(A) \to \Der_\bk(B)$ such that $\bk[ \image\nu] = B$ and diagram (\ref{983gh5vbbmBvnd2736t4udhi}-i) commutes 
for all $F \in \Leul_\bk(A)$.
\begin{equation}  \label {983gh5vbbmBvnd2736t4udhi}
\mbox{(i)} \quad \xymatrix{
B  \ar[r]^{ \psi(F) }  &  B  \\
A \ar[u]^\nu \ar[r]^F  &   A \ar[u]_\nu
}
\qquad
\mbox{(ii)} \quad \xymatrix@R=12pt{
A \ar@{->>}[dr]  \ar[r]^-{\phi}   &    \Leul_\bk(A)  \ar[r]^-{\psi} & \Der_\bk(B)    \\
 &   \phi(A) \ar@{^{(}->}[u] \\
& \ar@{^{(}->}[u] L \ar[r]^-{\isom} &  \Lscr_m \ar@{^{(}->}[uu] 
}
\end{equation}
Consider the Lie subalgebra $\Lscr_m := \psi(L)$ of $\Der_\bk(B)$, as in the commutative diagram (\ref{983gh5vbbmBvnd2736t4udhi}-ii),
and note that $\psi|_L : L \to \Lscr_m$ is an isomorphism of Lie algebras.
Since $L$ is $m$-generated, so is $\Lscr_m$.
By \eqref{A78830BE-0722-4BFF-A60D-6A27672F3A08} and part (b) of Lemma \ref{pcv0b9i349r09f}, $\Lscr_m$ is not weakly locally finite.

Note that $\Lscr_m$ is the Lie algebra that is denoted ``$L$'' in Example 5.6 of \cite{Daigle_LNSets_2020}.
So from that Example we obtain that $\Lscr_m \subseteq \lnd(B)$ 
and that every $(m-1)$-generated Lie subalgebra $\Lscr$ of $\Lscr_m$ is a nilpotent Lie algebra and a locally nilpotent subset of $\Der_\bk(B)$;
by Remark \ref{3142A156-11E1-4BE7-BB0C-BA4E9224347D} it follows that $\Lscr$ is also a locally finite subset of $\Der_\bk(B)$,
so the Proposition is proved. \hfill\qed
\end{nothing*}

\medskip

\noindent {\bf Funding.} The first author is supported by the University of Padova through the ``BIRD 2024 Project'' research grant.

%
%

\providecommand{\bysame}{\leavevmode\hbox to3em{\hrulefill}\thinspace}
\providecommand{\MR}{\relax\ifhmode\unskip\space\fi MR }
\providecommand{\MRhref}[2]{%
  \href{http://www.ams.org/mathscinet-getitem?mr=#1}{#2}
}
\providecommand{\href}[2]{#2}


\end{document}